\theoremstyle{definition}
\newtheorem{thm}{Theorem}[section]
\newtheorem{prop}[thm]{Proposition}
\newtheorem{conj}[thm]{Conjecture}
\newtheorem{defn}[thm]{Definition}
\newtheorem{exam}[thm]{Example}
\newtheorem{rmk}[thm]{Remark}
\theoremstyle{definition} 
\newenvironment{talign*}
{\csname align*\endcsname}
{\endalign}
\def\sl{\textrm{SL}(2,\mathbb{C})}
\def\psl{\textrm{PSL}(2,\mathbb{C})}
\def\Zbb{\mathbb{Z}}
\def\pmo{\{\pm 1\}}
\begin{document}
\begin{frontmatter}
	
	\title{On the Hikami-Inoue conjecture}

	\author{Jinseok Cho}
	\address{Department of Mathematics Education, Busan National University of Education}
	\ead{dol0425@bnue.ac.kr}
	
	\author{Seokbeom Yoon}
	\address{Department of Mathematical Sciences, Seoul National University}
	\ead{sbyoon15@snu.ac.kr}

	\author{Christian K. Zickert}
	\address{Department of Mathematics, University of Maryland, College Park}
	\ead{zickert@math.umd.edu}
	
	\begin{abstract}
		Given a braid presentation $D$ of a hyperbolic knot, Hikami and Inoue consider a system of polynomial equations arising from a sequence of cluster mutations determined by $D$. They show that any solution gives rise to shape parameters and thus determines a boundary-parabolic $\psl$-representation of the knot group. They conjecture the existence of a solution corresponding to the geometric representation. In this paper, we show that a boundary-parabolic representation $\rho$ arises from a solution if and only if the length of $D$ modulo $2$ equals the obstruction to lifting $\rho$ to a boundary-parabolic $\sl$-representation (as an element in $\Zbb_2$). In particular, the Hikami-Inoue conjecture holds if and only if the length of $D$ is odd. This can always be achieved by adding a kink to the braid if necessary. We also explicitly construct the solution corresponding to a boundary-parabolic representation given in the Wirtinger presentation of the knot group.
	\end{abstract}
	\begin{keyword} Hikami-Inoue conjecture, Ptolemy variety, braid, hyperbolic knot, boundary-parabolic representation, cluster coordinates.
	\MSC[2010] Primary: 57M25, 57M27. Secondary: 13F60. 
	\end{keyword}

\end{frontmatter}


\section{Introduction}

Let $D$ be a braid of length $n$ and width $m$. Hikami and Inoue \cite{hikami2015braids} considered $n+1$ cluster variables $\mathbf{x}^1,\mathbf{x}^2,\cdots, \mathbf{x}^{n+1}$, each of which consists of $3m+1$ variables, and  related two consecutive cluster variables $\mathbf{x}^{i}$ and $\mathbf{x}^{i+1}$ ($1 \leq i \leq n$) by an operator arising from cluster mutations. Precisely, if $D$ has a braid group presentation $\sigma_{k_1}^{\epsilon_1}\sigma_{k_2}^{\epsilon_2}\cdots\sigma_{k_n}^{\epsilon_n}$, where $\sigma_{k_i}$ denotes the standard generator of the $m$-braid group and $\epsilon_i=\pm1$, then we have 
$$\mathbf{x}^2=R_{k_1}^{\epsilon_1}(\mathbf{x}^1),\ \mathbf{x}^3 = R_{k_2}^{\epsilon_2}(\mathbf{x}^2),\mkern4mu\cdots,\ \mathbf{x}^{n+1}=R_{k_n}^{\epsilon_n}(\mathbf{x}^{n})$$ where $R^\pm_k$ is the operator given by
\begin{equation*}
R^\pm_k (x_1,\cdots,x_{3m+1}) = \left(x_1,\cdots,x_{3k-3},\  R^\pm(x_{3k-2},\cdots,x_{3k+4}),\  x_{3k+5},\cdots,x_{3m+1}\right).
\end{equation*}  We refer to the equations \eqref{eqn:rp} and \eqref{eqn:rm}
for the definition of $R^\pm$. See also \cite[\S 2.2]{hikami2015braids}.

\begin{defn}\label{defn:sol} The initial cluster variable $\mathbf{x}^1\in \Cbb^{3m+1}$ is called a \emph{solution} if $\mathbf{x}^1=\mathbf{x}^{n+1}$. 
\end{defn} 

Recall that the space $S^3 \setminus (K\cup\{p, q\})$ admits a decomposition into ideal octahedra, where $K$ is the knot represented by $D$ and $p\neq q\in S^3$ are two points not in $K$. See, for instance, \cite{thurston1999hyperbolic}, \cite{weeks2005computation}, or Section \ref{sec:oct}. Dividing each ideal octahedron into ideal tetrahedra as in Figure 4 of \cite{hikami2015braids},
Hikami and Inoue proved that a non-degenerate solution (see Definition \ref{defn:nond}) determines the shape parameter of each ideal tetrahedron so that these tetrahedra satisfy the gluing equations and completeness condition. In particular, we obtain  a boundary-parabolic representation \[\rho_{\mathbf{x}^1}:\pi_1(S^3\setminus K)=\pi_1(S^3 \setminus (K\cup\{p, q\}))\rightarrow \psl\] up to conjugation from a non-degenerate solution $\mathbf{x}^1$.

\begin{conj}\cite[Conjecture 3.2]{hikami2015braids} \label{conj:hi} Let $D$ be a braid presentation of a hyperbolic knot $K$. Then there exists a non-degenerate solution $\mathbf{x}^1$  such that the induced representation $\rho_{\mathbf{x}^1}$ is geometric, i.e., discrete and faithful.
\end{conj}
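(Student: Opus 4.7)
The plan is to attack Conjecture~\ref{conj:hi} by reverse engineering the solution from the geometric representation rather than producing it combinatorially. Since $\rho_{\mathbf{x}^1}$ is defined by reading off shape parameters from $\mathbf{x}^1$, the natural strategy is to invert this assignment at $\rho_{\mathrm{geo}}$: find decoration data which, when pushed through the Hikami--Inoue coordinate change, yields a cluster variable satisfying both non-degeneracy and the closure relation $\mathbf{x}^1=\mathbf{x}^{n+1}$ for the given braid $D$.

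First, I would make the bridge between the cluster variables and Ptolemy-type (Garoufalidis--Thurston--Zickert) coordinates explicit. Each crossing of $D$ is subdivided into five ideal tetrahedra, and the $3m+1$ entries of each $\mathbf{x}^i$ should be identifiable with Ptolemy coordinates on a horizontal slice through the braid. Under this identification, the mutation operator $R^\pm_k$ ought to match the Ptolemy transformation across the next ideal octahedron, so that the sequence $\mathbf{x}^1,\ldots,\mathbf{x}^{n+1}$ becomes the sequence of Ptolemy coordinates read slice by slice up the braid. Checking this match requires only a local computation at a single crossing.

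Second, I would start from $\rho_{\mathrm{geo}}$, choose a lift to $\sl$ and horospheres at each puncture of the octahedral decomposition, and compute the Ptolemy coordinates at the bottom slice. This gives a candidate $\mathbf{x}^1$. Non-degeneracy should follow from the fact that the underlying tetrahedra carry the non-degenerate shape parameters of the complete hyperbolic structure on $S^3\setminus K$.

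The hard part, and the step on which the whole plan hinges, is the closure condition $\mathbf{x}^1=\mathbf{x}^{n+1}$. Going once around the braid closure identifies each top strand with the bottom strand in the same column, but the Ptolemy coordinates transform under this identification with a sign that depends on the chosen lift of the meridian and on how many crossings the strand passes through. The total sign incurred after $n$ crossings is controlled by a $\Zbb_2$-valued cohomology class, essentially the obstruction to lifting $\rho_{\mathrm{geo}}$ to a boundary-parabolic $\sl$-representation. If this class coincides with $n \bmod 2$, every choice of decoration closes up and the conjecture follows for $D$. If not, no decoration of $\rho_{\mathrm{geo}}$ can close up, so the plan will not produce a solution corresponding to the geometric representation for this particular $D$; in that case the approach actually suggests that Conjecture~\ref{conj:hi}, as worded, is false for such $D$. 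I therefore expect this approach to succeed precisely in one parity class and to expose a genuine parity obstacle in the other, rather than to produce a proof for every braid $D$.
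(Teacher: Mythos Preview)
Your proposal matches the paper's approach and reaches the correct conclusion: the conjecture holds if and only if the braid length $n$ is odd. The paper carries out exactly your plan---identifying $R^\pm$ with Ptolemy relations twisted by an explicit obstruction cocycle $\sigma_D$ (Section~\ref{sec:main}), computing $[\sigma_D]=(-1)^n$ (Section~\ref{sec:proof}), and using that the geometric representation has obstruction class $-1$---so the parity dichotomy you predict is precisely Theorem~\ref{thm:ans}.

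One step needs more care than you give it: non-degeneracy. The triangulation lives on $S^3\setminus(K\cup\{p,q\})$, not on $S^3\setminus K$, so the tetrahedra are not the geometric ones and hyperbolicity alone does not force their shapes to be non-degenerate. The paper in fact replaces Hikami--Inoue's original five-term subdivision of each octahedron by a different one (Remarks~\ref{rmk:nond} and~\ref{rmk:essen}) precisely because the original contains an edge joining the knot to itself, along which non-degeneracy genuinely fails whenever the closure of $D$ has a kink. With the corrected subdivision, every edge of $\Tcal$ meets $p$ or $q$, and non-degeneracy is then obtained not automatically but by choosing the decoration vectors at the lifts of $p$ and $q$ generically so as to avoid finitely many vanishing determinants (Proposition~\ref{thm:dec}). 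Your sentence ``non-degeneracy should follow from the complete hyperbolic structure'' would not survive contact with the details.
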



\begin{rmk} \label{rmk:nond} In this paper, we shall use a different subdivision of an octahedron from \cite{hikami2015braids} (see Figure \ref{fig:fiveterm}). A non-degenerate solution, implying the non-degeneracy of the ideal tetrahedra, thus requires a slightly different condition (see Definition \ref{defn:nond}) from \cite{hikami2015braids}.
Henceforth, by a non-degenerate solution we mean a solution that satisfies the condition in Definition \ref{defn:nond}.
	We stress that this change of an ideal triangulation is essential for the existence of a non-degenerate solution (see Remark~\ref{rmk:essen}).
\end{rmk}

The main purpose of this paper is to analyze the conjecture. In particular, we prove the following, which is a consequence of the more general results Theorems~\ref{thm:int1} and \ref{thm:int2} below.
\begin{thm} \label{thm:ans}
	Conjecture \ref{conj:hi} holds if and only if the length of the braid is odd.
\end{thm}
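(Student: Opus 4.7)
The plan is to derive Theorem \ref{thm:ans} as a direct consequence of Theorems \ref{thm:int1} and \ref{thm:int2}. As advertised in the abstract, these together assert that a boundary-parabolic $\psl$-representation $\rho$ of $\pi_1(S^3 \setminus K)$ is induced by a non-degenerate solution for the braid $D$ if and only if $n \equiv o(\rho) \pmod 2$, where $o(\rho) \in \Zbb_2$ is the obstruction to lifting $\rho$ to a boundary-parabolic $\sl$-representation. Hence Theorem \ref{thm:ans} reduces to computing $o(\rho_{\mathrm{geo}})$ for the discrete faithful representation $\rho_{\mathrm{geo}}$ of a hyperbolic knot $K \subset S^3$, and proving it equals $1 \in \Zbb_2$.

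First I would unpack the obstruction concretely. Since every $\psl$-representation of a knot-complement group lifts to $\sl$, one may choose an $\sl$-lift $\widetilde{\rho}$ of $\rho_{\mathrm{geo}}$ that sends the meridian $\mu$ to a true trace-$+2$ unipotent element $U_\mu$. The Seifert longitude $\lambda$ commutes with $\mu$ in $\pi_1(S^3 \setminus K)$, so $\widetilde{\rho}(\lambda)$ lies in $\{\pm I\}\cdot U$, where $U$ is the unipotent subgroup fixing the parabolic fixed point of $U_\mu$; this determines a sign $\epsilon(\lambda) \in \{\pm 1\}$, and $o(\rho_{\mathrm{geo}})$ is $0$ iff $\epsilon(\lambda) = +1$.

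Second I would show $\epsilon(\lambda) = -1$ for every hyperbolic knot in $S^3$. The natural route is to use the explicit Wirtinger-level construction of a solution promised in the abstract: given the Wirtinger generators of $\pi_1(S^3 \setminus K)$ and a parabolic $\sl$-lift of each, one computes $\widetilde{\rho}(\lambda)$ as a word in these lifts along a Seifert push-off of $K$. A crossing-by-crossing sign bookkeeping then shows that the parity of the total sign accumulated along the Seifert longitude is always odd for a knot in $S^3$, independently of the diagram, so $\epsilon(\lambda) = -1$ and thus $o(\rho_{\mathrm{geo}}) = 1$.

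Combining the two steps, Theorem \ref{thm:ans} follows: the conjecture holds iff $n \equiv 1 \pmod 2$. The main obstacle is the sign computation of the second step, which must be uniform across all hyperbolic knots; the Wirtinger-to-solution construction makes this tractable by providing a concrete model in which every sign can be tracked explicitly. Finally I would note that any braid presentation can be converted to one of odd length by a Markov stabilization, so the geometric representation is always realizable by a non-degenerate solution after at most one such modification, matching the "adding a kink" remark from the abstract.
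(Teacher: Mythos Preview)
Your overall architecture matches the paper: deduce Theorem~\ref{thm:ans} from Theorems~\ref{thm:int1} and~\ref{thm:int2} together with the fact that the geometric representation of a hyperbolic knot has nontrivial obstruction class. The paper proceeds exactly this way and simply \emph{cites} the last fact (see~\cite{calegari2006real} and~\cite[\S3.2]{menal2012twisted}, combined with Proposition~\ref{prop:knot}), rather than proving it anew.

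The genuine gap is your proposed proof of that fact. You suggest computing $\widetilde{\rho}(\lambda)$ as a word in the $\sl$-lifts of the Wirtinger generators and then determining $\epsilon(\lambda)$ by ``crossing-by-crossing sign bookkeeping \ldots\ independently of the diagram.'' This cannot work: once every Wirtinger generator is lifted to a trace-$+2$ unipotent, there is no sign attached to individual crossings to be tracked; the trace of the product is governed by the actual matrices, not by diagram combinatorics. Indeed the obstruction class is a property of the representation, not of the knot---the same knot admits nontrivial boundary-parabolic $\psl$-representations with either obstruction class (for instance the trivial representation has $\epsilon(\lambda)=+1$), so no argument that uses only the diagram and ignores the specific $\rho$ can single out $-1$. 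Proposition~\ref{prop:knot} in the paper explains why the answer is independent of the \emph{lift} (the canonical longitude has even Wirtinger word length), but that is a much weaker statement than what you need.

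What is actually required is input from hyperbolic geometry: for the discrete faithful representation of a one-cusped hyperbolic $3$-manifold the longitude of a knot maps to an element of trace $-2$. This is the content of the cited references and is not a combinatorial observation. Replace your second step with that citation (or reproduce one of those arguments) and your proof is complete and coincides with the paper's.
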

Note that one can always make the braid length odd by adding a kink if necessary. 

\subsection{Main results}
Let $M$ be a compact $3$-manifold with non-empty boundary and $G$ be either $\psl$ or $\sl$.
Recall that a representation $\rho:\pi_1(M)\to G$ is \emph{boundary-parabolic} if it maps peripheral subgroups to conjugates of the subgroup $P$ of $G$ consisting of upper triangular matrices with ones on the diagonal. We shall sometimes call such a representation $\rho$ a \emph{$(G,P)$-representation}.

A representation $\pi_1(M)\to\psl$ may or may not lift to $\sl$ and the obstruction to lifting is a class in $H^2(M;\{\pm 1\})$. Also, a boundary-parabolic $\psl$-representation may lift to an $\sl$-representation which is not boundary-parabolic. The obstruction to lifting a boundary-parabolic $\psl$-representation $\rho$ to a boundary-parabolic $\sl$-representation is a class in $H^2(M,\partial M;\pmo)$ called the \emph{obstruction class of $\rho$}~\cite{garoufalidis2015complex,garoufalidis2015ptolemy}. Note that the image of this class in $H^2(M;\{\pm1\})$ is the obstruction to lifting $\rho$ to $\sl$. If $M=S^3\setminus \nu(K)$, where $\nu(K)$ denotes a small open regular neighborhood of a knot $K$,  then we have $H^2(M,\partial M ;\pmo) \simeq \pmo$. Therefore, the obstruction class of a boundary-parabolic $\psl$-represntation $\rho : \pi_1(M)\rightarrow \psl$ can be viewed as an element of $\pmo$. 
\begin{thm} \label{thm:int1}  Let $D$ be a braid of a knot $K$ (not necessarily hyperbolic). Then the obstruction class of $\rho_{\mathbf{x}^1}$ induced from a non-degenerate solution $\mathbf{x}^1$ is  $(-1)^n$ where $n$ is the length of $D$.
\end{thm}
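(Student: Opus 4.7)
My plan is to compute the obstruction class directly via the Ptolemy variety framework of Garoufalidis--Thurston--Zickert applied to the octahedral triangulation that Hikami--Inoue attach to the braid $D$. For a knot complement $M = S^3 \setminus \nu(K)$ we have $H^2(M, \partial M; \pmo) \simeq \pmo$, and the obstruction class can be read off as a holonomy sign of a canonical $\sl$-lift of $\rho_{\mathbf{x}^1}$ along a specific peripheral cycle. The task therefore reduces to constructing an explicit $\sl$-lift from $\mathbf{x}^1$ and tracking one sign.

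The first step is to translate cluster variables into Ptolemy coordinates. The $x_j$'s in $\mathbf{x}^i$ should be identified, up to a monomial change of coordinates, with edge-Ptolemy coordinates on the $i$-th horizontal slab of the octahedral decomposition, so that each operator $R_k^{\pm}$ encodes precisely the Ptolemy equations for the five-tetrahedron subdivision of the $k$-th octahedron. With this dictionary in place, the cluster variables give rise to an $\sl$-cocycle refining $\rho_{\mathbf{x}^1}$ once the edge signs are fixed consistently.

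Next, I would analyze how sign choices propagate across a single mutation. A direct local computation on one octahedron should show that matching the two Ptolemy lifts across a crossing forces a global sign factor of $-1$ to appear, independently of $k$ and of $\epsilon_i = \pm 1$. Iterating over the $n$ crossings, the signed lift of $\mathbf{x}^{n+1}$ differs from that of $\mathbf{x}^1$ by $(-1)^n$. Since the solution condition $\mathbf{x}^1 = \mathbf{x}^{n+1}$ equates the unsigned data at the top and bottom of the braid, this discrepancy is precisely the value of the obstruction cocycle on the peripheral cycle created by the braid closure; under the identification $H^2(M, \partial M; \pmo) \simeq \pmo$, it gives the obstruction class.

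The main obstacle will be the local sign computation: verifying that a single crossing contributes a factor of exactly $-1$ rather than $+1$ requires a careful orientation-consistent bookkeeping on the five-tetrahedron subdivision of the octahedron, together with the correct Ptolemy sign conventions for both positive and negative crossings. A secondary technical point is identifying the cycle along which the discrepancy is measured with a generator of $H^2(M, \partial M; \pmo)$; this should follow from the standard description of braid closures inside knot complements, but must be made explicit for the particular gluing used in the octahedral decomposition.
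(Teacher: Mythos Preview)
Your overall framework is the same as the paper's: interpret the cluster variables as Ptolemy coordinates on the five-term subdivision of the octahedral decomposition, so that a non-degenerate solution becomes a point of a Ptolemy variety $P^{\sigma}(\Tcal)$ for some $\sigma\in Z^{2}(M,\partial M;\{\pm1\})$, and then read off the obstruction class from $[\sigma]$. The paper does exactly this.

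Where you diverge from the paper is in the mechanism for determining $[\sigma]$. The paper does \emph{not} track a sign through successive mutations. Instead it writes down, once and for all, an explicit cocycle $\epsilon_D\in Z^{1}(\partial M;\{\pm1\})$ on the short edges of the truncated octahedra (Figure~\ref{fig:octahedron} in the paper), sets $\sigma_D=d(\epsilon_D)$, and then checks by direct computation on a single octahedron that the Ptolemy relations with obstruction $\sigma_D$ are literally the Hikami--Inoue $R^{\pm}$ equations. The class $[\sigma_D]$ is then computed via Theorem~\ref{thm:key}: it equals $\overline{\epsilon}_D(\lambda)$ for the canonical longitude $\lambda$. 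From the picture one reads $\overline{\epsilon}_D(\mu)=-1$ and $\overline{\epsilon}_D(\lambda_{bf})=1$, and since $\lambda=\lambda_{bf}\,\mu^{-w(D)}$ with $w(D)\equiv n\pmod 2$, one gets $(-1)^n$.

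Your proposed mechanism has a gap at exactly this point. The statement ``matching the two Ptolemy lifts across a crossing forces a global sign factor of $-1$'' is not well-posed as stated: Ptolemy coordinates are nonzero complex numbers, not signed objects, and there is no canonical sign to propagate through a mutation. More seriously, the ``peripheral cycle created by the braid closure'' that you invoke is not the canonical longitude. The identification $\mathbf{x}^{1}=\mathbf{x}^{n+1}$ glues edges that are incident to the auxiliary ideal vertices $p$ and $q$, not edges on the knot torus; a vertical pass through the $n$ levels of the braid is not a curve on $\partial\nu(K)$ at all. What actually carries the obstruction is $\lambda$, and the $(-1)^n$ comes not from ``$n$ crossings, each contributing $-1$'' but from $\overline{\epsilon}_D(\mu)^{-w(D)}$ together with $w(D)\equiv n\pmod2$. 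If you try to make your dynamical picture precise, you will be forced back to constructing $\epsilon_D$ (or something equivalent) and evaluating it on $\mu$ and $\lambda_{bf}$ separately, which is the paper's route.
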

 	
The obstruction class of the geometric representation of a hyperbolic knot is non-trivial. This follows from the fact that any lift of the geometric representation maps a longitude to an element with trace $-2$ (see e.g.~\cite{calegari2006real}, \cite[\S 3.2] {menal2012twisted} and also Proposition~\ref{prop:knot} below). Hence, Theorem \ref{thm:int1} shows that having odd braid length is necessary for Conjecture~\ref{conj:hi} to hold. The fact that this is also sufficient follows from the result below, which is proved in Section \ref{sec:comp}.

\begin{thm} \label{thm:int2} Let $D$ be a braid of a knot $K$ (not necessarily hyperbolic) and  $\rho : \pi_1(S^3\setminus K)\rightarrow \psl$ be a non-trivial boundary-parabolic representation. If the  obstruction class of $\rho$ is $(-1)^n$, where $n$ is the length of $D$, then there exists a non-degenerate solution $\mathbf{x}^1$ such that the induced representation $\rho_{\mathbf{x}^1}$ coincides with $\rho$ up to conjugation.	
\end{thm}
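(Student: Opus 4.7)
The plan is to construct $\mathbf{x}^1$ directly from the Wirtinger presentation of $\pi_1(S^3\setminus K)$ furnished by the braid $D$, by packaging lifts of the parabolic meridians into Ptolemy-type $2\times 2$ determinants. Label the $m$ strands at each of the $n+1$ horizontal slices of $D$, and for each strand $s$ let $\mu_s$ denote the corresponding meridian; at the $i$-th crossing $\sigma_{k_i}^{\epsilon_i}$ the incoming and outgoing meridians on the two affected strands satisfy the standard Wirtinger relation. Since each $\rho(\mu_s)\in\psl$ is parabolic, it fixes a unique ideal point in $\partial\Hb$, and a lift of this fixed point to $\sl$ amounts to a choice of column vector $c_s\in\Cbb^2$ up to sign.

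First, I would define the $3m+1$ components of $\mathbf{x}^i$ as Ptolemy coordinates at slice $i$: $2\times 2$ determinants of suitable pairs drawn from the $c_s$ of the strands present at slice $i$ together with two additional vectors coming from the auxiliary ideal points $p,q\in S^3$ associated to the octahedral decomposition of $S^3\setminus(K\cup\{p,q\})$. Second, I would verify $\mathbf{x}^{i+1}=R_{k_i}^{\epsilon_i}(\mathbf{x}^i)$ at each crossing. The formulas \eqref{eqn:rp} and \eqref{eqn:rm} for $R^\pm$ should reduce, under this definition, to the three-term Ptolemy identity $\det(a|b)\det(c|d)=\det(a|c)\det(b|d)+\det(a|d)\det(c|b)$ applied to the four column vectors at the crossing, because the Wirtinger relation at a crossing expresses the outgoing column vectors in terms of the incoming ones. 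This is a purely local check, done once each for $\sigma_k$ and $\sigma_k^{-1}$.

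Third, I would verify the closure condition $\mathbf{x}^{n+1}=\mathbf{x}^1$. Since the closure of $D$ is a knot, the identification between slices $n+1$ and $1$ visits every strand exactly once, and a fixed choice of column vectors at slice $1$ propagates via the Wirtinger relations to a possibly sign-flipped choice at slice $n+1$. The obstruction class of $\rho$ in $H^2(M,\partial M;\pmo)\simeq\pmo$ equals precisely this sign discrepancy. Because each Ptolemy coordinate is bilinear in two column vectors, flipping an even number of strand vectors preserves every coordinate while flipping an odd number negates the coordinates that touch those strands; a careful accounting of strand signs over the $n$ crossings gives a global discrepancy of $(-1)^n$ times the obstruction class, so the hypothesis forces $\mathbf{x}^{n+1}=\mathbf{x}^1$. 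Non-degeneracy then follows from the non-triviality of $\rho$, which ensures that adjacent strand vectors are linearly independent so that the relevant determinants do not vanish, and $\rho_{\mathbf{x}^1}=\rho$ up to conjugation because the shape parameters induced by $\mathbf{x}^1$ are cross-ratios of the $c_s$'s, which by construction realize $\rho$.

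The main obstacle is step three: an explicit cochain-level identification of the obstruction class of $\rho$ with the product of strand signs along the braid closure, together with a careful propagation of those signs through the bilinear Ptolemy coordinates. A subsidiary point is to pin down the two auxiliary vectors attached to $p$ and $q$ so that the extra ($3m+1$-st) coordinate transforms correctly under $R^\pm$; I expect this to be dictated uniquely, up to overall normalization, by compatibility with the first $3m$ components and with the geometry of the octahedral decomposition.
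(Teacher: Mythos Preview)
Your overall strategy --- assign column vectors to the geometric data of $\rho$ and form Ptolemy coordinates as $2\times 2$ determinants --- is the same as the paper's. But there are two genuine gaps.

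\textbf{Region vectors, not just two auxiliary vectors.} At a horizontal slice the $3m+1$ coordinates are not built from the $m$ strand vectors plus two fixed vectors for $p$ and $q$. The correct data are: one vector $W$ for (a fixed lift of) $p$; one vector $H_i$ for each arc (the eigenvector of $\widetilde\rho(g_i)$, as you have); and one vector $V_j$ for each \emph{region} of the diagram, with adjacent regions related by $V_{j'}=\widetilde\rho(g_i)^{\pm 1}V_j$ across the $i$-th arc. The coordinates at a slice are $\det(H_i,W)$ (over-edges), $\det(V_j,H_i)$ (under-edges), and $\det(V_j,W)$ (regional edges). So at a slice you need $m$ arc vectors, $m+1$ region vectors, and $W$ --- not $m+2$ vectors. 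Without the region vectors your ``two additional vectors'' cannot produce the $m+1$ regional coordinates, and your verification of $R^\pm$ will not go through.

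\textbf{Non-degeneracy is a genericity argument, not a consequence of non-triviality.} Your claim that ``non-triviality of $\rho$ ensures adjacent strand vectors are linearly independent'' is false in general: at a kink two adjacent arcs carry the same meridian, hence the same fixed-point vector $H_i$, so that particular determinant vanishes identically. What non-triviality actually buys is that each $\rho(g_i)$ is a non-identity parabolic, hence has a \emph{unique} eigendirection. Non-degeneracy then requires that $W$ avoids all these eigendirections (so $\det(H_i,W)\neq 0$), that each $V_j$ avoids them (so $\det(V_j,H_i)\neq 0$), and that $W$ and the $V_j$'s are pairwise independent (so $\det(V_j,W)\neq 0$). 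Since the $V_j$'s are determined by one initial choice and the eigendirections form a finite set, this is achieved by choosing $W$ and the initial $V$ generically. You need to make this argument; it does not come for free.

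On your ``main obstacle'' (the sign/closure bookkeeping): the paper sidesteps it entirely. Any choice of signs for the $c(e)$'s yields a point of $P^\sigma(\Tcal)$ for \emph{some} $\sigma\in Z^2(M,\partial M;\pmo)$, and the class $[\sigma]$ is automatically the obstruction class of $\rho$. The hypothesis says $[\sigma]=[\sigma_D]$, and Ptolemy varieties with cohomologous obstruction cocycles are canonically isomorphic, so one can adjust signs to land in $P^{\sigma_D}(\Tcal)$. This replaces your strand-by-strand sign chase with a one-line cohomological argument.
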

We remark that the solution can be constructed explicitly when $\rho$ is given using the Wirtinger presentation of the knot group. This uses techniques developed in~\cite{cho2016optimistic}.

\subsection{Organization of the paper}
In Section \ref{sec:coy}, we recall the notion of Ptolemy coordinates with obstruction class. In Section \ref{sec:main}, we give a short review on the Hikami-Inoue cluster variables and clarify the relation between these cluster variables and Ptolemy assignments by constructing a particular obstruction cocycle (Section \ref{sec:hicocycle}). This gives a proof of Theorem \ref{thm:int1}. In  Section~\ref{sec:dec}, we prove Theorem \ref{thm:int2} and present an explicit way to compute a solution when a boundary-parabolic representation is given in the Wirtinger presentation of the knot group.

\subsection{Acknowledgements} Christian Zickert was supported by NSF grant DMS-1711405.

\section{Ptolemy varieties with obstruction class} \label{sec:coy}

Let $M$ be an oriented compact $3$-manifold with non-empty boundary. 
We fix an ideal triangulation $\Tcal$ of the interior of $M$. This endows $M$ with a decomposition into truncated tetrahedra whose triangular faces triangulate $\partial M$ (see Figure \ref{fig:trunctaion}). We denote by $M^i$ or $\partial M^i$ the set of the oriented $i$-cells (unoriented when $i=0$). We call an edge of $\partial M$ a \emph{short-edge} and call an edge of $M$ not in $\partial M$ a \emph{long-edge}. For an oriented 1-cell $e$, we let $-e$ denote $e$ with its opposite orientation.
\subsection{Obstruction classes}

For a group $G$ the set $C^i(M;G)$ of all set maps from $M^i$ to $G$ forms a group with the operation naturally induced from $G$. 
We call $\sigma \in C^1(M;G)$ a \emph{$G$-cocycle} if it satisfies
\begin{enumerate}[(i)]
	\item $\sigma(e) \sigma(-e)=1$  for all $e \in M^1$;
	\item $\sigma(e_1) \sigma(e_2)\cdots \sigma(e_m) =1$ for each face $f$ of $M$ where $e_1,\cdots,e_m$ are the boundary edges of the face in the cyclic order determined by a choice of orientation of $f$.\label{cocycle}
\end{enumerate}
The set  $Z^1(M;G)$ of all $G$-cocycles admits a
$C^0(M;G)$-action defined as follows.
\begin{equation*}
Z^1(M; G) \times C^0(M; G) \rightarrow Z^1(M; G), \quad (\sigma,\tau)\mapsto \sigma \boldsymbol{\cdot} \tau
\end{equation*}
where $\sigma \boldsymbol{\cdot} \tau : M^1 \rightarrow G$ is given by $(\sigma \boldsymbol{\cdot} \tau)(e) = \tau(v)^{-1} \sigma(e) \tau(w)$ for $e \in M^1$, where $v$ and $w$ are the initial and terminal vertices of $e$, respectively. 
The following fact is well-known (see e.g.~\cite{zickert2009volume,neumann2004extended}). 

\begin{prop} \label{prop:bij}
	The orbit space $H^1(M;G):=Z^1(M;G)/C^0(M;G)$ has a natural bijection with the set of all conjugacy classes of representations $\rho : \pi_1(M)\rightarrow G$.
\end{prop}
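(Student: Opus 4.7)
The plan is to construct a natural map from cocycles to representations, verify $C^0$-equivariance, and then check that the induced map on orbit spaces is bijective. The argument is a direct adaptation of the classical dictionary between non-abelian $1$-cocycles on a CW complex and representations of its fundamental group.

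First, I fix a basepoint $v_0\in M^0$ and a spanning tree $T$ of the $1$-skeleton of the CW structure induced by the decomposition into truncated tetrahedra. Given $\sigma\in Z^1(M;G)$, every element of $\pi_1(M,v_0)$ is represented by an edge-loop $e_1\cdots e_k$ based at $v_0$, and I set $\rho_\sigma(\gamma):=\sigma(e_1)\cdots\sigma(e_k)$. Condition (i) ensures that reversing the orientation of an edge inverts its $\sigma$-value, so that this product depends only on the class of the loop in the $1$-skeleton. Because every based homotopy can be built by successive pushes across $2$-cells and the $2$-skeleton already carries $\pi_1(M)$, condition (ii) implies that $\rho_\sigma(\gamma)$ depends only on $\gamma\in\pi_1(M,v_0)$, and that $\rho_\sigma$ is a homomorphism.

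Next I verify $C^0$-equivariance: if $\sigma'=\sigma\boldsymbol{\cdot}\tau$, then for an edge-loop $e_1\cdots e_k$ at $v_0$ with intermediate vertices $v_0=w_0,w_1,\ldots,w_k=v_0$, the product $\prod_i \tau(w_{i-1})^{-1}\sigma(e_i)\tau(w_i)$ telescopes to $\tau(v_0)^{-1}\rho_\sigma(\gamma)\tau(v_0)$, so $\rho_{\sigma'}$ is globally conjugate to $\rho_\sigma$ by $\tau(v_0)$. For injectivity on orbits, given any conjugation $\rho_{\sigma'}=g^{-1}\rho_\sigma g$ I build a witness $\tau\in C^0(M;G)$ by setting $\tau(v_0)=g$ and $\tau(v):=\sigma(p_v)^{-1}\tau(v_0)\sigma'(p_v)$, where $p_u$ denotes the unique $T$-path from $v_0$ to $u$; checking $\sigma'=\sigma\boldsymbol{\cdot}\tau$ on a general edge $e$ from $v$ to $w$ reduces to the hypothesis after closing $p_v\cdot e\cdot p_w^{-1}$ into a based loop.

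For surjectivity on orbits, given $\rho:\pi_1(M,v_0)\to G$ I define $\sigma(e):=\rho([p_v\cdot e\cdot p_w^{-1}])$ for each oriented edge $e$ from $v$ to $w$. Condition (i) is immediate, and condition (ii) holds because each face relator $e_1\cdots e_m$ bounds a $2$-cell and is therefore null-homotopic after conjugating by the appropriate $T$-path; by construction $\rho_\sigma=\rho$. The main obstacle is the well-definedness of $\rho_\sigma$ on homotopy classes: this hinges on the observation that the CW structure coming from the truncated tetrahedra, whose $2$-cells comprise both the boundary triangles and the hexagonal cross-sections of the interior faces, furnishes a complete set of relators for $\pi_1(M,v_0)$. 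Once this point is settled, the remainder of the argument is formal telescoping.
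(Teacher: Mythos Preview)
Your argument is correct and is the standard non-abelian \v{C}ech/CW dictionary between $G$-valued $1$-cocycles and representations of the fundamental group. The paper does not actually prove this proposition: it simply records it as ``well-known'' and cites \cite{zickert2009volume,neumann2004extended}. So there is nothing in the paper to compare against beyond noting that your write-up supplies exactly the details those references contain. One small point of presentation: when you say condition~(i) makes the product depend ``only on the class of the loop in the $1$-skeleton,'' it would be clearer to say it is invariant under insertion or deletion of backtracks $e\cdot(-e)$; this is the edge-path group statement you are implicitly using before invoking the $2$-cell relators.
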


Note that if $G$ is abelian, $H^1(M;G)$ is canonically isomorphic to the usual cellular cohomology group with the coefficient $G$.

Let $G$ be either $\sl$ or $\psl$ and $P$ be the subgroup of $G$ consisting of the upper triangular matrices with ones in the diagonal. We let $C^i(M,\partial M; G,P)$ be the subset of $C^i(M;G)$ consisting of elements $\sigma \in C^i(M;G)$ satisfying $\sigma(x) \in P$ for all $x \in \partial M^i$, and let $Z^1(M,\partial M ;G,P) := Z^1(M;G) \cap \mkern1mu C^1(M,\partial M;G,P)$. 
An element of $Z^1(M,\partial M ;G,P)$ is called a \emph{$(G,P)$-cocycle}. One can easily check (see e.g.~\cite{zickert2009volume}) that every $(G,P)$-representation can be represented by a $(G,P)$-cocycle. In fact, $H^1(M,\partial M;G,P) :=Z^1(M, \partial M;G,P)/C^0(M,\partial M;G,P)$ is in natural bijection with the set of (conjugacy classes of) so-called \emph{decorated $(G,P)$-representations} (see e.g.~\cite{zickert2009volume,garoufalidis2015complex}), but we shall not need this here.

From the short exact sequence of groups $1 \rightarrow \pmo \rightarrow \sl \rightarrow \psl \rightarrow 1$, we obtain exact sequences (the standard proof of exactness still works in low degree even though the terms are only sets, not groups)
\begin{equation*}
H^1(M;\sl)\rightarrow H^1(M;\psl) \rightarrow H^2(M;\pmo) \ \textrm{ and }
\end{equation*}
\begin{equation*}
H^1(M,\partial M;\sl,P)\rightarrow H^1(M,\partial M;\psl,P)\overset{\delta}{\rightarrow} H^2(M,\partial M;\pmo).
\end{equation*} 
In particular, the latter sequence tells us that a $(\psl,P)$-representation $\rho$ admits a $(\sl,P)$-lifting if and only if $\delta(\rho) \in H^2(M,\partial M;\pmo)$ vanishes, where $\rho$ is viewed as a $(\psl,P)$-cocycle. The element $\delta(\rho)$ is called the \emph{obstruction class} of $\rho$. Note that it does not depend on the choice of a $(\psl,P)$-cocycle representing $\rho$. Recall that we have the long exact sequence
$$ H^1(M;\pmo) \rightarrow H^1(\partial M;\pmo) \rightarrow H^2(M,\partial M;\pmo) \rightarrow H^2(M;\pmo).$$ 
It thus follows that if $\rho$ lifts to $\sl$ (e.g.~ if $H^2(M;\pmo)=0$), then the obstruction class of $\rho$ in $H^2(M,\partial M;\pmo)$ can be viewed as an element of $\textrm{Coker}(H^1(M;\pmo) \rightarrow H^1(\partial M;\pmo))$. In particular, if $M$ is a knot exterior in $S^3$, the obstruction class of $\rho$ is determined by the lift of the longitude. More precisely, the following holds. 


\begin{prop} \label{prop:knot} Let $K \subset S^3$ be a knot  and $M$ be the knot exterior.   Then the obstruction class of $(\psl,P)$-representation $\rho$ (as an element of $H^2(M,\partial M; \pmo) \simeq \{\pm1\}$) coincides with half of $\textrm{tr}(\widetilde{\rho}(\lambda))$ where $\widetilde{\rho} : \pi_1(M)\rightarrow \sl$ is any lift of $\rho$ and $\lambda$ is the canonical longitude of $K$.
\end{prop}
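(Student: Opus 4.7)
The plan is to trace through the long exact sequence of the pair $(M,\partial M)$ with $\pmo$-coefficients and to recover the obstruction class by evaluating a suitable cohomology class on the canonical longitude. First I would note that for any knot exterior $M \subset S^3$ one has $H^2(M;\pmo)=0$: by Poincar\'e--Lefschetz duality $H^2(M;\Zbb) \cong H_1(M,\partial M;\Zbb)=0$, and the universal coefficient theorem gives the vanishing with $\pmo$-coefficients since $H_1(M;\Zbb)=\Zbb$ is torsion-free. Combined with the first short exact sequence quoted in the paper, this already ensures the existence of some lift $\widetilde{\rho}:\pi_1(M)\to\sl$ of $\rho$, and the long exact sequence
$$H^1(M;\pmo)\to H^1(\partial M;\pmo)\xrightarrow{\partial} H^2(M,\partial M;\pmo)\to H^2(M;\pmo)=0$$
realises $H^2(M,\partial M;\pmo)$ as the cokernel of $H^1(M;\pmo)\to H^1(\partial M;\pmo)$.

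Next I would compute this cokernel concretely. Since $H_1(M;\Zbb)$ is generated by the meridian $\mu$ and the canonical longitude $\lambda$ is null-homologous in $M$ (it bounds a Seifert surface), the image of $H^1(M;\pmo)\to H^1(\partial M;\pmo)=\mathrm{Hom}(\Zbb\mu\oplus\Zbb\lambda,\pmo)$ consists of exactly those homomorphisms that vanish on $\lambda$. Hence the cokernel is $\pmo$, and under this identification a class $\varphi\in H^1(\partial M;\pmo)$ descends to $\varphi(\lambda)\in\pmo$ in $H^2(M,\partial M;\pmo)$.

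The key step is to pin down the obstruction class of $\rho$ as a specific $\varphi$. Given any lift $\widetilde{\rho}:\pi_1(M)\to\sl$ of $\rho$, its restriction to $\pi_1(\partial M)$ lands in the preimage of $P$, which is $\pm P$. Because $-I$ is central and of order two, $\pm P\cong\pmo\times P$, and projection onto the first factor defines a homomorphism $\varphi:\pi_1(\partial M)\to\pmo$, i.e.\ an element of $H^1(\partial M;\pmo)$. I would then unwind the connecting map $\partial$ at the cocycle level: starting from a $(\psl,P)$-cocycle representing $\rho$, choose edge-wise lifts to $\sl$ that lie in $P$ on all short (boundary) edges but are otherwise unconstrained; the failure-of-cocycle terms on the $2$-faces define a cocycle in $C^2(M,\partial M;\pmo)$ whose class is by definition the obstruction, and a direct bookkeeping argument identifies this class with $\partial[\varphi]$.

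Combining the previous two steps, the obstruction class of $\rho$ equals $\varphi(\lambda)$. Finally, since $\lambda$ lies in the commutator subgroup of $\pi_1(M)$, any two $\sl$-lifts of $\rho$ differ by a homomorphism $\pi_1(M)\to\pmo$ that sends $\lambda$ to $1$, so $\widetilde{\rho}(\lambda)$ is itself independent of the chosen lift. Thus $\varphi(\lambda)=+1$ precisely when $\widetilde{\rho}(\lambda)\in P$ (trace $2$) and $\varphi(\lambda)=-1$ precisely when $\widetilde{\rho}(\lambda)\in -P$ (trace $-2$), in either case giving $\varphi(\lambda)=\tfrac{1}{2}\mathrm{tr}(\widetilde{\rho}(\lambda))$. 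The main obstacle I expect is the cocycle-level matching in the third paragraph: the pieces are conceptually clear, but the definitions of the obstruction class from Section~\ref{sec:coy} need to be pinned down with care to rigorously justify $\partial[\varphi]=\delta(\rho)$.
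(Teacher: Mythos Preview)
Your argument is correct and reaches the same conclusion, but by a different route from the paper. The paper's proof is more elementary: using a Wirtinger presentation it observes that $\rho$ has exactly two $\sl$-lifts $\widetilde{\rho}_\pm$, distinguished by $\mathrm{tr}\,\widetilde{\rho}_\pm(\mu)=\pm 2$, so a boundary-parabolic $\sl$-lift exists if and only if $\mathrm{tr}\,\widetilde{\rho}_+(\lambda)=2$; the commutator-subgroup observation (that $\lambda$ has even Wirtinger length) then gives $\widetilde{\rho}_+(\lambda)=\widetilde{\rho}_-(\lambda)$ and hence independence of the lift. No cohomological diagram-chasing is needed. Your approach instead works through the long exact sequence of the pair, identifies $H^2(M,\partial M;\pmo)$ explicitly as the cokernel of restriction (i.e.\ evaluation on $\lambda$), and reads off the obstruction as $\varphi(\lambda)$ for the sign character $\varphi$ of an arbitrary lift. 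The step you flag as the main obstacle---showing $\delta(\rho)=\partial[\varphi]$ at the cocycle level---is in fact already packaged in the paper as the map $d$ of equation~\eqref{eqn:d}: if $\widetilde\sigma$ is a genuine $\sl$-cocycle for $\widetilde\rho$ and one corrects each short edge by its sign $\varphi$ to land in $P$, the resulting failure-of-cocycle on each face is precisely $d(\varphi)$, whose class is $\partial[\varphi]$. Your route has the merit of making the isomorphism $H^2(M,\partial M;\pmo)\cong\pmo$ explicit and transparently functorial; the paper's is shorter and sidesteps all exact-sequence bookkeeping by singling out the candidate lift $\widetilde{\rho}_+$ from the start.
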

\begin{proof}
	Considering any Wirtinger presenation of $\pi_1(M)$, it is easy to check that $\rho$ has only two lifts $\widetilde{\rho}_+$ and $\widetilde{\rho}_- : \pi_1(M)\rightarrow \sl$ such that $\textrm{tr}(\widetilde{\rho}_+(\mu))=2$ and $\textrm{tr}(\widetilde{\rho}_-(\mu))=-2$, respectively, where $\mu$ is a merdian of $K$. Since $\pi_1(\partial M)$ is an abelian group generated by $\mu$ and $\lambda$, $\rho$ admits a $(\sl,P)$-lifting if and only if $\textrm{tr}(\widetilde{\rho}_+(\lambda))=2$. Therefore, by definition, the obstruction class of $\rho$ coincides with half of $\textrm{tr}(\widetilde{\rho}_+(\lambda))$. On the other hand, the canonical longitude $\lambda$ is in the commutator subgroup of $\pi_1(M)$ and thus it should be expressed in Wirtinger generators of even length. Therefore, we have $\widetilde{\rho}_+(\lambda)=\widetilde{\rho}_-(\lambda)$.
\end{proof}

\subsection{Ptolemy varieties} \label{sec:ptol}
Recall that $\Tcal$ is an ideal triangulation of the interior of a compact manifold $M$. 
We denote by $\Tcal^1$ the set of the oriented 1-cells. We shall often identify each $e \in \Tcal^1$ with a long-edge of $M$ in a natural way.

The third author with Garoufalidis and Thurston \cite{garoufalidis2015complex} (see also \cite{zickert2009volume}) gave an efficient parametrization of $(\psl,P)$-representations with a given obstruction class. Precisely, for $\sigma\in Z^2(M,\partial M;\pmo)$ they defined the \emph{Ptolemy variety $P^{\sigma}(\Tcal)$ with the obstruction cocycle $\sigma$}  by the set of all set maps $c : \Tcal^1 \rightarrow \Cbb \setminus\{0\}$ satisfying $-c(e)=c(-e)$ for all $e \in \Tcal^1$ and
\begin{equation} \label{eqn:obs}
\sigma_2\,	c(l_{02})c(l_{13})=\sigma_3\, c(l_{03})c(l_{12})+  \sigma_1\,	 c(l_{01}) c(l_{23})
\end{equation}  for each ideal tetrahedron $\Delta$ (with vertices $\{0,1,2,3\}$) of $\Tcal$, where $l_{ij}$ is the oriented edge of $\Delta$ going from vertex $i$ to vertex $j$,  and $\sigma_i$ is the $\sigma$-value on the hexagonal face opposite to the vertex $i$. See Figure \ref{fig:trunctaion}. We call an element $c \in P^\sigma(\Tcal)$ a \emph{Ptolemy assignment}.
\begin{figure}[!h]
	\centering
	\scalebox{1}{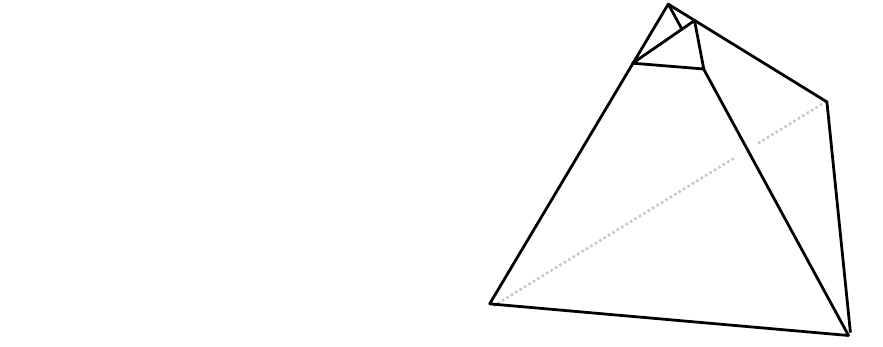}
	\caption{A truncated tetrahedron.}
	\label{fig:trunctaion}
\end{figure}

A Ptolemy assignment $c \in P^\sigma(\Tcal)$ corresponds to a $(\psl,P)$-cocycle, $\Phi_c$, such that
$\delta(\Phi_c)=[\sigma] \in H^2(M,\partial M ;\pmo)$. It thus \emph{induces} a $(\psl,P)$-representation $\rho_c : \pi_1(M)\rightarrow \psl$ up to conjugation whose obstruction class is $[\sigma]$. Note that $\Phi_c$ is explicitly expressed by $c \in P^\sigma(\Tcal)$ as follows. 
\begin{equation*} 
\Phi_c(l_{ij}) =\pm \begin{pmatrix}
0 & -c(l_{ij})^{-1} \\ c(l_{ij}) & 0
\end{pmatrix}
,\quad
\Phi_c(s^k_{ij}) =\pm \begin{pmatrix}
1 & -\sigma_l \dfrac{c(l_{ji})}{c(l_{ik}) c(l_{kj})} \\ 0 & 1
\end{pmatrix} 
\end{equation*}  for Figure \ref{fig:trunctaion}. Here $\{i,j,k,l\}=\{0,1,2,3\}$ and $s^k_{ij} \in \partial M^1$ is the edge contained in the face $[i,j,k]$ and parallel to $l_{ij}$ (see Figure \ref{fig:trunctaion}).
The cocycle condition~\eqref{cocycle} is then automatically satisfied for the hexagonal faces, and the Ptolemy relation~(\ref{eqn:obs}) ensures that it is also satisfied for the triangular faces. We refer \cite[\S 9]{garoufalidis2015complex} for details.

Now let us consider the map
\begin{equation}\label{eqn:d}
d : Z^1(\partial M ;\pmo) \rightarrow Z^2(M,\partial M;\pmo), \ \epsilon \mapsto d(\epsilon)
\end{equation}
defined by $d(\epsilon)$-value on a face of $M$ by multiplying $\epsilon$-values of all edges of $\partial M$ that are contained in the face. We note that it induces the usual map $H^1(\partial M;\pmo )\rightarrow H^2(M,\partial M;\pmo)$.
\begin{prop} \label{prop:lift}
	Let $\epsilon \in Z^1(\partial M ;\pmo)$. Then any $(\psl,P)$-representation $\rho_c$ induced from $c \in P^{d(\epsilon)}(\Tcal)$ admits a lift $\widetilde{\rho}_c :\pi_1(M)\rightarrow \sl$ such that
	\begin{equation*}
	\widetilde{\rho}_c(\gamma) =\begin{pmatrix}
	\overline{\epsilon}(\gamma) & * \\ 0 & \overline{\epsilon}(\gamma) 
	\end{pmatrix} 
	\end{equation*}for all $\gamma \in \pi_1(\partial M)$ up to conjugation, where $\overline{\epsilon} : \pi_1(\partial M)\rightarrow \pmo$ is the homomorphism induced from  $\epsilon$. 
\end{prop}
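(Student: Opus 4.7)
The plan is to construct the desired $\sl$-lift explicitly at the cocycle level, by modifying the natural $\sl$-lift of the Ptolemy $(\psl,P)$-cocycle $\Phi_c$ on short edges using $\epsilon$. Concretely, I would define a set map $\widetilde{\Phi}: M^1 \to \sl$ by taking the natural lift on long edges,
\[ \widetilde{\Phi}(l) = \begin{pmatrix} 0 & -c(l)^{-1} \\ c(l) & 0 \end{pmatrix}, \]
and on each short edge $s$ the lift
\[ \widetilde{\Phi}(s) = \epsilon(s) \cdot M_s, \]
where $M_s$ denotes the upper triangular matrix with $1$'s on the diagonal whose $\psl$-class is $\Phi_c(s)$ (read off from the formula in Section~\ref{sec:ptol}). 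The task is to check that $\widetilde{\Phi} \in Z^1(M;\sl)$, whence Proposition~\ref{prop:bij} produces the desired lift $\widetilde{\rho}_c$ of $\rho_c$.

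The edge-involution axiom $\widetilde{\Phi}(e)\widetilde{\Phi}(-e) = I$ reduces to direct matrix identities using $c(-l) = -c(l)$ on long edges together with $\epsilon(s)\epsilon(-s) = 1$ on short edges. The face axiom then splits into two cases. On a triangular face of $\partial M$ bounded by short edges $s_1, s_2, s_3$, the product of the $M_{s_i}$'s is already $I$ (the product of three upper-triangular unipotents that is $\pm I$ in $\sl$ must be $+I$, and $\Phi_c$ is a $\psl$-cocycle), so the full product equals $\epsilon(s_1)\epsilon(s_2)\epsilon(s_3) \cdot I$, which is $+I$ because $\epsilon \in Z^1(\partial M;\pmo)$. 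On a hexagonal face $f$ with short edges $s_1, s_2, s_3$, the ``$+$''-lift (taking $M_e$ on every edge) fails the cocycle condition by the factor $\sigma(f) = d(\epsilon)(f) = \epsilon(s_1)\epsilon(s_2)\epsilon(s_3)$, which is the defining property of the Ptolemy variety $P^{d(\epsilon)}(\Tcal)$ from \cite{garoufalidis2015complex}. Multiplying the short edges by $\epsilon$-values contributes the same factor once more, so the total sign collapses to $d(\epsilon)(f)^2 = 1$.

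Once $\widetilde{\Phi}$ is established as an $\sl$-cocycle, the peripheral claim is immediate. Any $\gamma \in \pi_1(\partial M)$ is represented by a concatenation of oriented short edges $s_1, \ldots, s_r$, and therefore
\[ \widetilde{\rho}_c(\gamma) \ = \ \prod_{i=1}^{r} \widetilde{\Phi}(s_i) \ = \ \Big(\prod_{i=1}^{r} \epsilon(s_i)\Big) \cdot U \ = \ \overline{\epsilon}(\gamma) \cdot U \]
for some $U \in P$, giving the asserted matrix form up to conjugation (the conjugation coming from the choice of a path from the basepoint to $\partial M$). The one step that will require genuine care is the sign bookkeeping on hexagonal faces: the assertion that the ``$+$''-lift of $\Phi_c$ has cocycle defect exactly $\sigma(f)$ relies on the precise normalization of $\Phi_c$ in \cite{garoufalidis2015complex}; everything else is either a routine matrix calculation or a direct consequence of the definitions of $Z^1(\partial M;\pmo)$ and $P^{d(\epsilon)}(\Tcal)$.
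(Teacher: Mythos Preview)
Your approach is essentially identical to the paper's: both define the $\sl$-lift by keeping the canonical lift $\begin{pmatrix}0&-c(l)^{-1}\\c(l)&0\end{pmatrix}$ on long edges and multiplying the unipotent lift on each short edge $s$ by $\epsilon(s)$. The paper phrases this as forming $\widetilde{\epsilon}\cdot\widetilde{\Phi}_c$, where $\widetilde{\epsilon}\in C^1(M;\pmo)$ is the trivial extension of $\epsilon$, and simply asserts that this is a cocycle ``by definition''; you unpack that assertion by checking the triangular and hexagonal face conditions separately, which is a welcome clarification but not a different idea. Your flagged ``delicate step''---that the defect of the canonical $\sl$-lift on a hexagonal face equals $\sigma(f)$---is indeed the substance behind the paper's ``by definition'', and it holds because the short-edge formula for $\Phi_c$ on the face opposite vertex $l$ involves only $\sigma_l$, so one reduces to the $\sigma_l=1$ case (where the hexagonal cocycle identity is automatic from the formulas in \cite{garoufalidis2015complex,zickert2009volume}) via conjugation by $\mathrm{diag}(-1,1)$, picking up exactly the sign $\sigma_l$.
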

\begin{proof} 
	We may choose a lift $\widetilde{\Phi}_c \in C^1(M;\sl)$ of $\Phi_c$ such that
	\begin{equation*}
	\widetilde{\Phi}_c(l) = \begin{pmatrix}
	0 & -c(l)^{-1} \\ c(l) & 0
	\end{pmatrix}	
	\textrm{ and }
	\widetilde{\Phi}_c(s) = \begin{pmatrix}
	1 & \ast \\ 0 & 1
	\end{pmatrix}
	\end{equation*} for all $l \in M^1 \setminus \partial M^1$ and $s \in \partial M^1$. One can check that $\widetilde{\Phi}_c$ satisfies the cocycle condition for every triangular face of $M$ (but may not for all faces).
	Let $\widetilde{\epsilon} \in C^1(M;\pmo)$ be the trivial extension of $\epsilon$, i.e., $\widetilde{\epsilon}(e) := \epsilon(e)$ if $e \in \partial M^1$ and  $\widetilde{\epsilon}(e):=1$, otherwise.
	Then by definition $\widetilde{\epsilon} \cdot \widetilde{\Phi}_c : M^1 \rightarrow \sl$ is a cocycle satisfying
	$$(\widetilde{\epsilon} \cdot \widetilde{\Phi}_c)(e)= \begin{pmatrix}
	\epsilon(e) & * \\ 0 & \epsilon(e)
	\end{pmatrix}$$ for all $e \in \partial M$.
	The proposition follows by letting $\widetilde{\rho}_c :\pi_1(M)\rightarrow \sl$ be a representation induced from $\widetilde{\epsilon} \cdot \widetilde{\Phi}_c$. 
\end{proof}

Combining Propositions \ref{prop:knot} and \ref{prop:lift}, we obtain the following.
\begin{thm} \label{thm:key} Let $K \subset S^3$ be a knot and $M=S^3 \setminus \nu(K)$. Then any $(\psl,P)$-representation $\rho_c$ induced from $c\in P^{d(\epsilon)}(\Tcal)$ has the obstruction class $\overline{\epsilon}(\lambda) \in \pmo \simeq H^2(M,\partial M;\pmo)$, where $\overline{\epsilon} :\pi_1(\partial M)\rightarrow \pmo$ is  the homomorphism induced from $\epsilon \in Z^1(\partial M ; \pmo)$ and $\lambda$ is the canonical longitude of $K$.
\end{thm}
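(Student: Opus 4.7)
The plan is to chain together Propositions~\ref{prop:lift} and \ref{prop:knot}; the theorem is essentially a translation between the two perspectives (Ptolemy/obstruction cocycle versus trace of a lift) under the particular identification $H^2(M,\partial M;\pmo)\simeq \pmo$ for a knot exterior.

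First I would invoke Proposition~\ref{prop:lift}, which directly applies because the Ptolemy assignment $c$ lies in $P^{d(\epsilon)}(\Tcal)$. This produces an $\sl$-lift $\widetilde{\rho}_c:\pi_1(M)\to\sl$ of $\rho_c$ whose restriction to any peripheral element $\gamma\in\pi_1(\partial M)$ is upper triangular with both diagonal entries equal to $\overline{\epsilon}(\gamma)$. Specializing to the canonical longitude $\lambda$, I get
\[
\widetilde{\rho}_c(\lambda)=\begin{pmatrix}\overline{\epsilon}(\lambda) & * \\ 0 & \overline{\epsilon}(\lambda)\end{pmatrix},
\]
so $\textrm{tr}(\widetilde{\rho}_c(\lambda))=2\,\overline{\epsilon}(\lambda)$.

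Next I would feed this particular lift into Proposition~\ref{prop:knot}, which states that for a knot exterior the obstruction class of $\rho_c$, regarded as an element of $H^2(M,\partial M;\pmo)\simeq \pmo$, equals $\tfrac{1}{2}\textrm{tr}(\widetilde{\rho}(\lambda))$ for any $\sl$-lift $\widetilde{\rho}$ of $\rho_c$. Applying this with $\widetilde{\rho}=\widetilde{\rho}_c$ and substituting the trace computation above immediately gives that the obstruction class of $\rho_c$ is $\overline{\epsilon}(\lambda)$, which is exactly the claim.

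I do not foresee any real obstacle. The only point worth checking is compatibility between the two propositions: Proposition~\ref{prop:knot} is formulated for an arbitrary $\sl$-lift, whereas Proposition~\ref{prop:lift} singles out one specific lift (the one obtained by twisting $\widetilde{\Phi}_c$ by $\widetilde{\epsilon}$). This is harmless because the proof of Proposition~\ref{prop:knot} already observes that the two possible $\sl$-lifts of a $(\psl,P)$-representation agree on $\lambda$, since $\lambda$ lies in the commutator subgroup of $\pi_1(M)$; hence the trace on $\lambda$ is lift-independent and the two results combine cleanly.
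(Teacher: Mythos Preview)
Your proposal is correct and matches the paper's own argument exactly: the paper states the theorem as an immediate consequence of combining Propositions~\ref{prop:knot} and~\ref{prop:lift}, which is precisely what you do. The extra remark about lift-independence on $\lambda$ is already built into Proposition~\ref{prop:knot}, so no additional justification is needed.
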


\section{The Hikami-Inoue cluster variables} \label{sec:main}

\subsection{The octahedral decomposition of a knot complement with two points removed}\label{sec:oct}
Let $K \subset S^3$ be a knot and let $\nu(K\cup\{p,q\})$ denote a tubular neigborhood of the union of $K$ with two points $p\neq q\in S^3$ not in $K$. Whenever we choose a knot diagram of $K$, we have a decomposition of the space $M=S^3\setminus \nu \mkern1mu(K\cup \{p,q\})$ into blocks each of which is a cube with two cylinders (whose core is the knot) removed. See Figure \ref{fig:block}. Note that $M$ is a $3$-manifold with 3 boundary components (two
spheres and a torus) whose interior is homeomorphic to $S^3\setminus(K\cup\{p,q\})$. Now consider two quadrilaterals $Q_1$ and $Q_2$ in each block as in Figure \ref{fig:block} and collapse them horizontally so that their vertical edges are respectively identified. We call the resulting object a \emph{pinched block}. 
		 
		\begin{figure}[!h]
			\centering
			\scalebox{1}{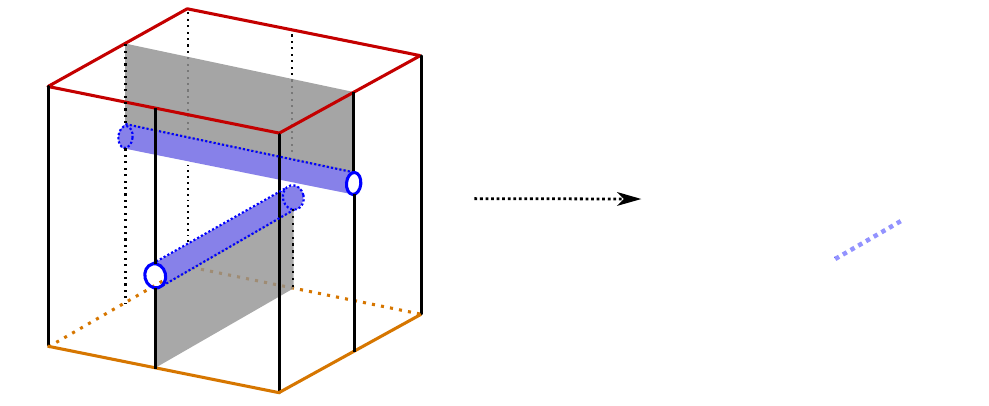}
			\caption{A pinched block}
			\label{fig:block}
		\end{figure} 
	
		On the other hand, a pinched block can also be obtained from a truncated octahedron by identifying two pairs of edges as in Figure \ref{fig:fiveterm}~(right). 
		Therefore, one can obtain $M$ by gluing truncated octahedra, and it thus follows that the interior of $M$ can be decomposed into ideal octahedra  (one per crossing). We denote  by $\Ocal$ this octahedral decomposition of $S^3\setminus(K\cup\{p,q\})$. It is due to Dylan Thuston~\cite{thurston1999hyperbolic} (see also \cite{weeks2005computation}).


		\begin{figure}[!h]
			\centering
			\scalebox{1}{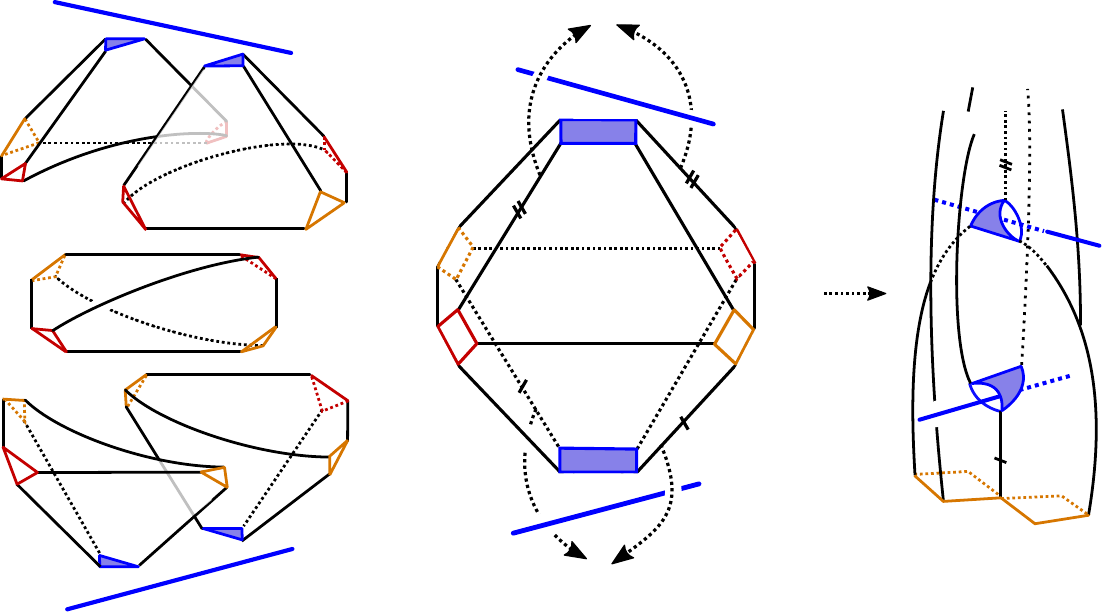}
			\caption{A truncated octahedron}
			\label{fig:fiveterm}
		\end{figure}
	
	\subsection{The Hikami-Inoue cluster variables}\label{sec:octa}
	The edges of an ideal octahedron (as in Figure~\ref{fig:fiveterm}) correspond to  vertical edges of a block as in Figure~\ref{fig:block}~(left). We label these edges by $x_1,\cdots,x_7,\widetilde{x}_1,\cdots,\widetilde{x}_7$ as in Figure \ref{fig:tempt} with the obvious relations $x_1=\widetilde{x}_1$ and $x_7=\widetilde{x}_7$. As indicated in Figure~\ref{fig:tempt}~(left) we shall regard the edges $x_i$ as being above a crossing, and the edges $\widetilde{x}_i$ as being below the crossing.
	\begin{figure}[!h]
	\centering
	\scalebox{1}{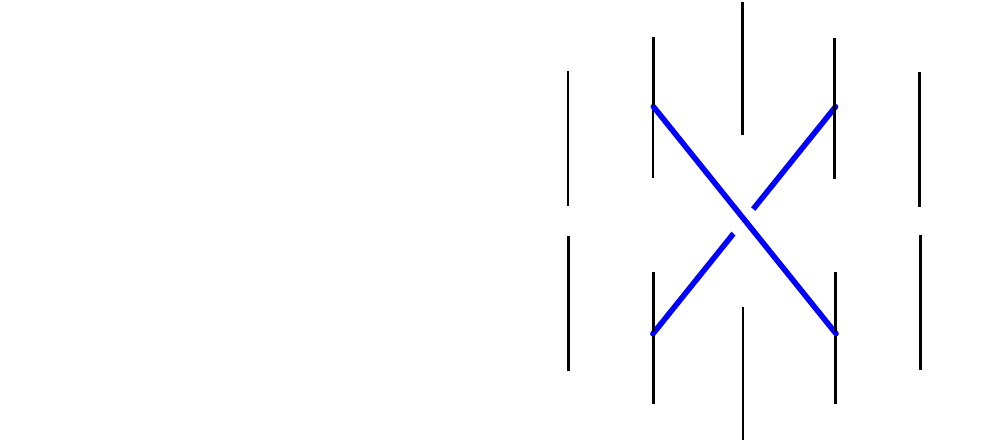}
	\caption{Edges of an octahedron at a crossing}
	\label{fig:tempt}
	\end{figure} 
	
Assigning a complex-valued variable to each of the edges $x_1,\cdots,x_7$, $\widetilde{x}_1,\cdots,\widetilde{x}_7$ with the same label as the edge itself, Hikami and Inoue considered the equation \[(\widetilde{x}_1,\cdots,\widetilde{x}_7)=R^\pm(x_1,\cdots,x_7)\] (see \cite[\S 2.2]{hikami2014cluster}) where $R^\pm$ is a certain operator defined by rational polynomial equations. As we shall see in Section~\ref{sec:hicocycle}, these equations are equivalent to Ptolemy relations for a particular obstruction cocycle. 

	Now suppose that the knot diagram $D$ is given by a braid with presentation $\sigma_{k_1}^{\epsilon_1}\cdots\sigma_{k_n}^{\epsilon_n}$. Here $\sigma_{k_i}$ denotes the standard generator of the $m$-braid group and $\epsilon_i\in\pmo$. Similar to the edge-labeling described in the previous paragraph, we label the oriented edges of the octahedral decomposition $\Ocal$ as follows.
	\begin{enumerate}
	\item Draw $n+1$ imaginary horizontal lines on the braid $D$ so that there is only one crossing between two consecutive lines (see Figures  \ref{fig:ocal} and \ref{fig:braid}). 
	\item As in Figure \ref{fig:tempt}~(left), whenever a horizontal line meets  $D$ there are two corresponding edges, and whenever a horizontal line meets a region of (the closure of) $D$, there is one corresponding edge. Since each of the horizontal lines meets the braid $m$ times and the regions $m+1$ times, it corresponds to $3m+1$ edges of $\Ocal$.
	\item For the $i$-th horizontal line we orient the corresponding edges and denote them by $x^i_1,\cdots,x^i_{3m+1}$ as in Figure \ref{fig:ocal}, and let $\mathbf{x}^i=(x^i_1,\cdots,x^i_{3m+1})$.
	\end{enumerate}
	\begin{figure}[!h]
	\centering          
	\scalebox{1}{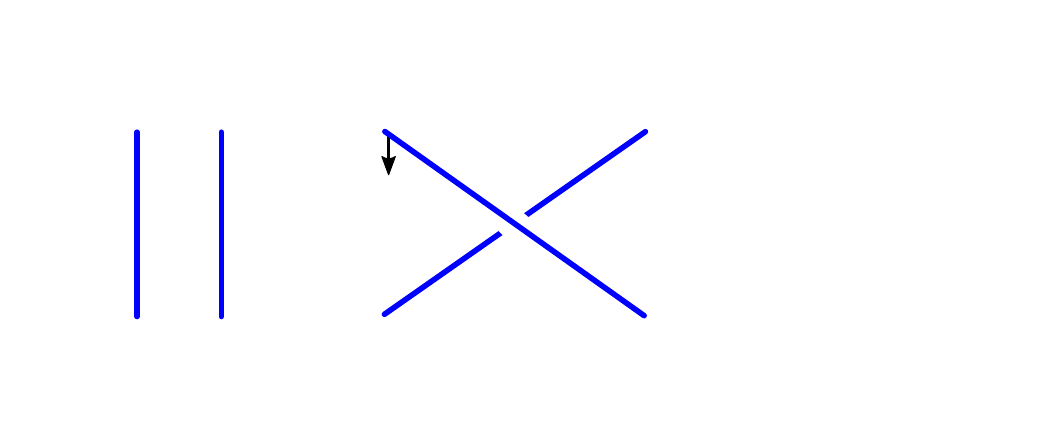}
	\caption{Edges of $\Ocal$ around the $i$-th level of a braid}
	\label{fig:ocal}
	\end{figure}
	Note that there are many overlapped labelings; for instance, in Figure \ref{fig:ocal} we have $x^{i}_j= x^{i+1}_j$ for $j=1,\cdots,3k-2$ and $j=3k+4,\cdots,3m+1$.

We again assign a complex-valued variable to each oriented edge of $\Ocal$ and denote the variable by the same as the edge itself. Hikami and Inoue \cite{hikami2014cluster} related consecutive cluster variables $\mathbf{x}^i$ and $\mathbf{x}^{i+1}$ ($1 \leq i\leq n$) by the equation 
	$$\mathbf{x}^{i+1}=R^{\epsilon_i}_{k_i}(\mathbf{x}^i)$$ where the operator $R^{\pm}_k$ is defined by
	\begin{equation*} 
	R^\pm_k (x_1,\cdots,x_{3m+1}) = \left(x_1,\cdots,x_{3k-3},\ R^\pm(x_{3k-2},\cdots,x_{3k+4}),\ x_{3k+5},\cdots,x_{3m+1}\right).
	\end{equation*} 
	Note that $R^{\pm}_k$ only affects the variables above and below the crossing.
	
	Recall that an initial cluster variable $\mathbf{x}^1 \in \Cbb^{3m+1}$ is called a solution if $\mathbf{x}^1=\mathbf{x}^{n+1}$. Whenever we have a solution $\mathbf{x}^1 \in \Cbb^{3m+1}$, we define the set map $$c_{\mathbf{x}^1} : \Ocal^1 \rightarrow \Cbb$$ by assigning the variable $x^i_j$ to the oriented edge of $\Ocal$ labeled by the same name. The fact that this assignment respects the face identifications in $\Ocal$ follows directly from the definitions of $R^\pm$ and $R^\pm_k$.
	

		\subsection{The obstruction cocycle}	\label{sec:hicocycle}

Let $\Tcal$ be the ideal triangulation of $S^3\setminus (K\cup\{p,q\})$ obtained by decomposing each octahedron of $\Ocal$ into 5 ideal tetrahedra as in Figure \ref{fig:fiveterm}~(left). As explained earlier this induces a triangulation of the boundary of $M=S^3\setminus \nu(K\cup\{p,q\})$.

We define a cocycle $\epsilon_D\in Z^1(\partial M;\pmo)$ on $\partial M$ by assigning signs to the short edges of the truncated tetrahedra. Note that each short edge either lies in the top/bottom of a truncated octahedron, or on one of the sides. We shall call the edges \emph{top/bottom-edges} or \emph{side-edges} accordingly. We assign signs to the top/bottom edges as indicated in Figure~\ref{fig:octahedron} and assign $+1$ do all of the side edges. This is clearly a cocycle, which respects the face pairings and thus gives rise to a cocycle in $\epsilon_D \in Z^1(\partial M;\pmo)$ as desired. We stress that $\epsilon_D$ depends on the decomposition of $M$, in particular the choice of a braid presentation $D$ of $K$.
\begin{figure}[!h]  
	\centering
	\scalebox{1}{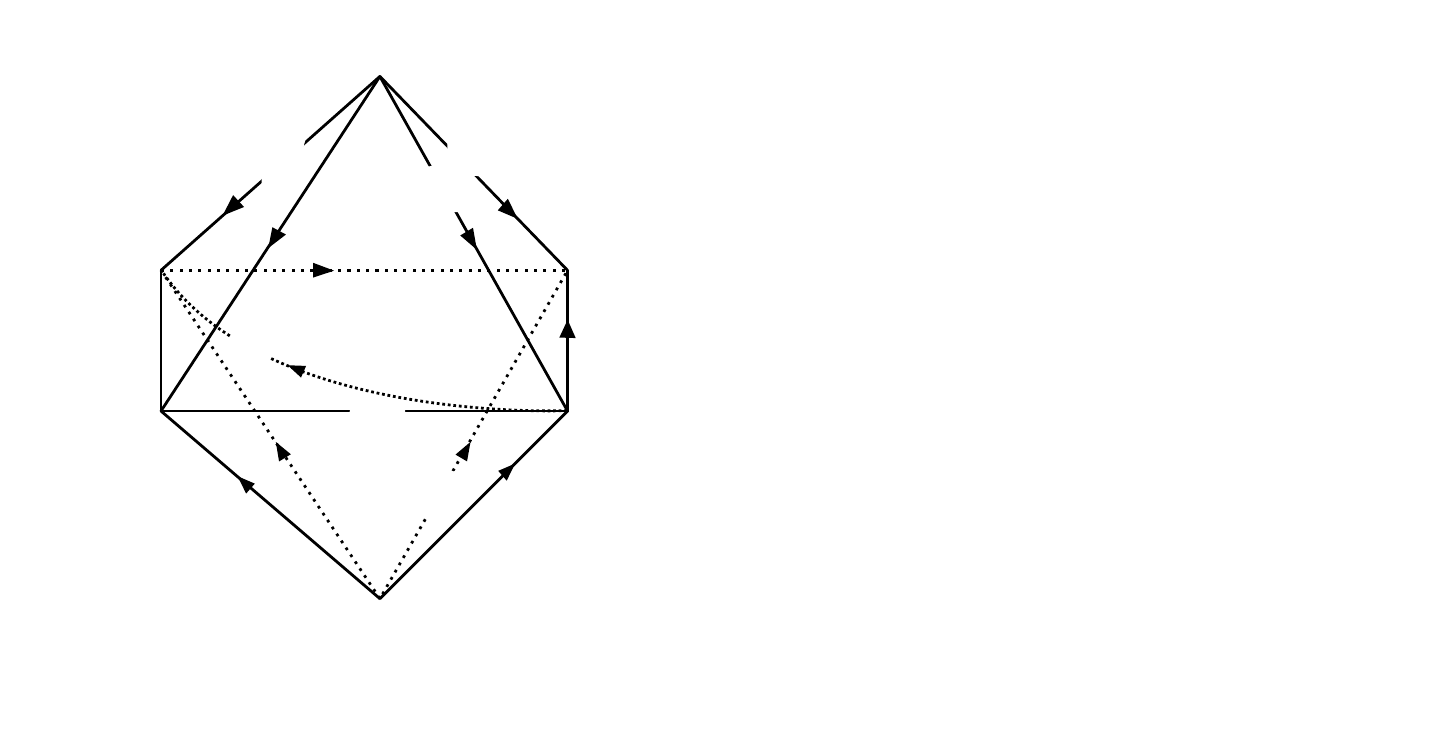}
	\caption{An ideal octahedron at a crossing}
	\label{fig:octahedron}
\end{figure}

The cocycle $\epsilon_D$ is illustrated in Figure~\ref{fig:boundarycocycle}, where $\mu$ and $\lambda_{bf}$ denote a meridian and the black-board framed longitude of $K$, respectively. In particular, $\epsilon_D$ induces the homomorphism $\overline{\epsilon}_D : \pi_1(\nu(K)) \rightarrow \pmo$ that maps $\mu$ to $-1$ and $\lambda_{bf}$ to $1$.
\begin{figure}[!h]
	\centering          
	\scalebox{1}{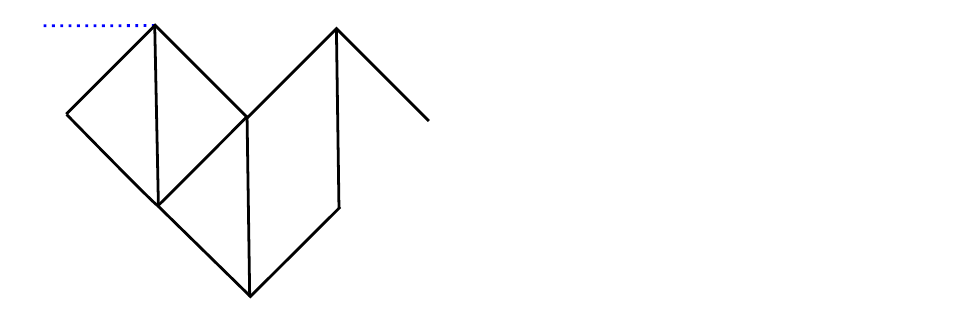}
	\caption{Configuration of $\epsilon_D$ on the boundary torus}
	\label{fig:boundarycocycle}
\end{figure}

	\subsection{Proof of Theorem \ref{thm:int1}}		 \label{sec:proof}
	Let us consider an octahedron of $\Ocal$. We index the vertices by $\{0,\cdots,5\}$ and denote the oriented edges as in Figure \ref{fig:octahedron}. We  compute the Ptolemy relation (\ref{eqn:obs}) for each of the ideal tetrahedra with the obstruction cocycle  $\sigma_D:=d(\epsilon_D) \in Z^2(M,\partial M;\pmo)$. Recall that the map $d$ is given in the equation~\eqref{eqn:d}. For example, the tetrahedron with vertices $\{0,3,4,5\}$ in Figure \ref{fig:octahedron}(a) gives
		\allowdisplaybreaks
		\begin{equation*}
			\begin{array}{rrcl}
			& \sigma_4c(l_{04})c(l_{35})&=&\sigma_5\, c(l_{05})c(l_{34})+ \sigma_3c(l_{03}) c(l_{45}) \\ \Leftrightarrow & (-1)x_3 x_4 &=&  x_3 x_1+  (-1)x_2 y_1 \\
		\Leftrightarrow& x_2 y_1 &=&x_3 x_4+ x_1 x_3.
			\end{array}
		\end{equation*}
		Similarly, we obtain the following from the other ideal tetrahedra:  
		\allowdisplaybreaks
		\begin{equation*}
			\begin{array}{rrcl}
			 	\{0,3,4,5\} :& x_2 y_1&=& x_3 x_4+ x_1 x_3\\
				\{1,2,3,5\} :& x_6 y_2&=&x_5 x_7+x_4 x_5 \\
				\{2,3,4,5\} :& x_4 \widetilde{x}_4&=& x_1 x_7+y_1 y_2  \\
				\{0,2,4,5\} :& \widetilde{x}_5 y_1 &=& x_3 \widetilde{x}_4 + x_3x_7\\
				\{1,2,3,4\} :& \widetilde{x}_3 y_2&=&x_5 \widetilde{x}_4+x_1 x_5 
			\end{array}
		\end{equation*} for Figure \ref{fig:octahedron}(a) and 
		\begin{equation*}
			\begin{array}{rrcl}
			 	\{0,2,4,5\} : & y_1 x_5&=&x_4 x_6+x_6 x_7\\
				\{1,2,3,4\} : & x_3 y_2&=&x_1 x_2+x_2 x_4 \\
				\{2,3,4,5\} : & x_4 \widetilde{x}_4&=&y_1 y_2+x_1 x_7  \\
				\{0,3,4,5\} : & \widetilde{x}_2 y_1 &=& x_6 \widetilde{x}_4 + x_1x_6\\
				\{1,2,3,5\} : & \widetilde{x}_6 y_2 &=& x_2 x_7  +x_2 \widetilde{x}_4
			\end{array}
		\end{equation*}	for Figure \ref{fig:octahedron}(b).

	 	Considering $x_1,\cdots,x_7$ as given variables, we obtain
	 	\begin{equation}\label{eqn:ya}
	 		(y_1,y_2)=\left(\dfrac{x_3(x_1 +x_4)}{x_2},\ \dfrac{x_5( x_4+ x_7)}{x_6}\right)
	 	\end{equation}
	 	\begin{equation*} 
	 			 (\widetilde{x}_3,\widetilde{x}_4,\widetilde{x}_5)=\begin{pmatrix}
	 			 \dfrac{x_1 x_3 x_5+x_3 x_4 x_5+x_1 x_2 x_6}{x_2 x_4}\\[1em]
	 			 \dfrac{x_1 x_3 x_4 x_5+ x_3 x_4^2 x_5+x_1 x_3 x_5 x_7 +x_3 x_4 x_5 x_7+ x_1 x_2 x_6 x_7}{x_2 x_4 x_6}\\[1em]
	 			 \dfrac{x_3 x_4 x_5+x_3 x_5 x_7+x_2 x_6 x_7}{x_4 x_6}
	 			 \end{pmatrix}^T
	 	\end{equation*} for Figure \ref{fig:octahedron}(a) and 
	 	\begin{equation}\label{eqn:yb}
	 		(y_1,y_2)=\left(\dfrac{x_6(x_4 +x_7)}{x_5},\ \dfrac{x_2( x_1+ x_4)}{x_3}\right)	 
	 	\end{equation}
	 	\begin{equation*} 
			 	(\widetilde{x}_2,\widetilde{x}_4,\widetilde{x}_6)=\begin{pmatrix}
			 	\dfrac{x_1 x_3 x_5+x_1 x_2 x_6+x_2 x_4 x_6}{x_3 x_4}\\[1em]
			 	\dfrac{x_1 x_2 x_4 x_6+ x_2 x_4^2 x_6+x_1 x_3 x_5 x_7 +x_1 x_2 x_6 x_7+ x_2 x_4 x_6 x_7}{x_3 x_4 x_5}\\[1em]
			 	\dfrac{x_2 x_4 x_6+x_3 x_5 x_7+x_2 x_6 x_7}{x_4 x_5}
			 	\end{pmatrix}^T
	 	\end{equation*}
	 	for Figure \ref{fig:octahedron}(b). 
	 	Letting $\widetilde{x}_1:=x_1,\ \widetilde{x}_2:=x_5,\ \widetilde{x}_6:=x_3,\ \widetilde{x}_7:=x_7$ for Figure \ref{fig:octahedron}(a) and $\widetilde{x}_1:=x_1,\ \widetilde{x}_3:=x_6,\ \widetilde{x}_5:=x_2,\ \widetilde{x}_7:=x_7$ for Figures \ref{fig:octahedron}(b) (note that these equations come from Figures \ref{fig:block} and \ref{fig:tempt}), we obtain  
	 	 		\begin{equation} \label{eqn:rp}
	 	 		\begin{pmatrix}
	 	 		\widetilde{x}_1\\[0.5em]
	 	 		\widetilde{x}_2\\[0.5em]
	 	 		\widetilde{x}_3\\[0.5em]
	 	 		\widetilde{x}_4\\[0.5em]
	 	 		\widetilde{x}_5\\[0.5em]
	 	 		\widetilde{x}_6\\[0.5em]
	 	 		\widetilde{x}_7\\[0.5em]
	 	 		\end{pmatrix}^T \mkern-10mu =
	 	 		\begin{pmatrix}
	 			 x_1\\[0.5em]
	 			 x_5\\[0.5em]
	 			 \dfrac{x_1 x_3 x_5+x_3 x_4 x_5+x_1 x_2 x_6}{x_2 x_4}\\[1em]
	 			 \dfrac{x_1 x_3 x_4 x_5+ x_3 x_4^2 x_5+x_1 x_3 x_5 x_7 +x_3 x_4 x_5 x_7+ x_1 x_2 x_6 x_7}{x_2 x_4 x_6}\\[1em]
	 			 \dfrac{x_3 x_4 x_5+x_3 x_5 x_7+x_2 x_6 x_7}{x_4 x_6}\\[1em]
	 			 x_3 \\[0.5em]
	 			 x_7 \\
	 			 \end{pmatrix}^T \mkern-10mu =
	 			 R
	 			 \begin{pmatrix}
	 			 x_1\\[0.5em]
	 			 x_2\\[0.5em]
	 			 x_3\\[0.5em]
	 			 x_4\\[0.5em]
	 			 x_5\\[0.5em]
	 			 x_6\\[0.5em]
	 			 x_7\\[0.5em]
	 			 \end{pmatrix}^T 
	 			 \end{equation}
	 			 for Figure \ref{fig:octahedron}(a) and
	 			 \begin{equation} \label{eqn:rm}
	 			 \begin{pmatrix}
	 			 \widetilde{x}_1\\[0.5em]
	 			 \widetilde{x}_2\\[0.5em]
	 			 \widetilde{x}_3\\[0.5em]
	 			 \widetilde{x}_4\\[0.5em]
	 			 \widetilde{x}_5\\[0.5em]
	 			 \widetilde{x}_6\\[0.5em]
	 			 \widetilde{x}_7\\[0.5em]
	 			 \end{pmatrix}^T \mkern-10mu 
	 			 =\begin{pmatrix}
	 			 x_1\\[0.5em]
	 			 \dfrac{x_1 x_3 x_5+x_1 x_2 x_6+x_2 x_4 x_6}{x_3 x_4}\\[1em]
	 			 x_6\\[0.5em]		
	 			 \dfrac{x_1 x_2 x_4 x_6+ x_2 x_4^2 x_6+x_1 x_3 x_5 x_7 +x_1 x_2 x_6 x_7+ x_2 x_4 x_6 x_7}{x_3 x_4 x_5}\\[1em]
	 			 x_2\\[0.5em]
	 			 \dfrac{x_3 x_5 x_7+x_2 x_4 x_6+x_2 x_6 x_7}{x_4 x_5}\\[1em]
	 			 x_7
	 			 \end{pmatrix}^T \mkern-10mu =
	 			 R^{-1} \mkern-5mu
	 			 \begin{pmatrix}
	 			 x_1\\[0.5em]
	 			 x_2\\[0.5em]
	 			 x_3\\[0.5em]
	 			 x_4\\[0.5em]
	 			 x_5\\[0.5em]
	 			 x_6\\[0.5em]
	 			 x_7\\[0.5em]
	 			 \end{pmatrix}^T 
	 			 \end{equation}
	 		for Figure \ref{fig:octahedron}(b). The equations (\ref{eqn:rp}) and (\ref{eqn:rm}) exactly coincide with the definition of $R^\pm$ in \cite{hikami2015braids}. See \cite[Equation (2.12)]{hikami2015braids}.
	 		
	 		\begin{rmk}
	 			Subdividing the octahedron into five tetrahedra as in Figure~\ref{fig:octahedron} corresponds to taking the form of the $R$-operator given in the equation (2.14) instead of (2.9) of \cite{hikami2015braids}.
	 		\end{rmk}
	 	
		

		Now let $D$ be a braid of length $n$ and width $m$. Let  $c_{\mathbf{x}^1} : \Ocal^1 \rightarrow \Cbb$ be the set map induced from a solution $\mathbf{x}^1 \in  \Cbb^{3m+1}$ as in Section \ref{sec:octa}. Recall that $\Tcal$ has two additional edges per crossing compared to $\Ocal$. We extend the set map to $c_{\mathbf{x}^1} : \Tcal^1 \rightarrow \Cbb$ by defining the values on the added edges using the equations (\ref{eqn:ya}) and (\ref{eqn:yb}). 
		We say that a solution $\mathbf{x}^1$ is \emph{non-degenerate} if \[c_{\mathbf{x}^1}(e) \neq 0 \] for all $e\in \Tcal^1$. One can easily check from  the equations (\ref{eqn:ya}) and (\ref{eqn:yb}) that this is equivalent to the following.
		
		\begin{defn}\label{defn:nond}
			A solution $\mathbf{x}^1$ is said to be \emph{non-degenerate} if every cluster variable $\mathbf{x}^i=(x^i_1,\cdots,x^i_{3m+1})$ satisfies $x^i_j \neq 0$ for all $1 \leq j \leq 3m+1$ and $x^i_{3j-2} \neq - x^i_{3j+1}$ for all $1 \leq j \leq m$.			
		\end{defn}
			
		The previous computation in this section tells us that the set map $c_{\mathbf{x}^1} : \Tcal^1 \rightarrow \Cbb \setminus\{0\}$ induced from a non-degenerated solution $\mathbf{x}^1$ is a point of the Ptolemy variety $P^{\sigma_D}(\Tcal)$ with the obstruction cocycle $\sigma_D \in Z^ 2(M,\partial M;\pmo)$. We have thus proven:
		\begin{prop} A non-degenerate solution $\mathbf{x}^1$ induces a boundary parabolic representation \[\rho_{\mathbf{x}^1} : \pi_1(S^3 \setminus K)=\pi_1(M)\rightarrow \psl\] (up to conjugation) whose obstruction class is $[\sigma_D] \in H^2(M,\partial M;\pmo)$.
		\end{prop}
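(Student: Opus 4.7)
The plan is to exhibit $c_{\mathbf{x}^1}$ as an element of the Ptolemy variety $P^{\sigma_D}(\mathcal{T})$ and then invoke the machinery of Section~\ref{sec:ptol}. First, I would verify that $c_{\mathbf{x}^1}:\mathcal{T}^1 \to \mathbb{C}\setminus\{0\}$ is a well-defined set map. On the edges of $\mathcal{O}$ this was explained in Section~\ref{sec:octa}: the overlapped labels at the sides of a crossing are consistent by construction, and the identification of $\mathbf{x}^{n+1}$ with $\mathbf{x}^1$ (which is forced by $\mathbf{x}^1$ being a solution) guarantees that the top and bottom of the closed braid glue consistently. On the two additional edges per octahedron coming from the subdivision of Figure~\ref{fig:fiveterm}(left), the values $y_1, y_2$ are determined by the formulas \eqref{eqn:ya} and \eqref{eqn:yb}, and the fact that each internal edge belongs to exactly one octahedron of $\mathcal{O}$ makes the extension well-defined. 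Non-vanishing of $c_{\mathbf{x}^1}$ on every edge is exactly what Definition~\ref{defn:nond} asserts, once one reads off the formulas \eqref{eqn:ya} and \eqref{eqn:yb} (the denominators $x_2$, $x_6$, $x_3$, $x_5$ are nonzero and the numerators $x_1+x_4$, $x_4+x_7$ are nonzero by the non-degeneracy condition).

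Next, I would check that $c_{\mathbf{x}^1}$ satisfies the Ptolemy relation \eqref{eqn:obs} on each of the five ideal tetrahedra of every octahedron, with the obstruction cocycle $\sigma_D = d(\epsilon_D)$. This is the content of the calculation carried out in Section~\ref{sec:proof}: the signs on the hexagonal faces prescribed by $\sigma_D$ were read off from Figure~\ref{fig:octahedron}, and for every crossing, positive or negative, the five resulting Ptolemy equations were solved simultaneously and shown to reduce exactly to the Hikami--Inoue operator $R^{\pm}$ of \eqref{eqn:rp} and \eqref{eqn:rm}. Thus on each octahedron, the values of $c_{\mathbf{x}^1}$ induced by $\mathbf{x}^i$ (with $y_1,y_2$ defined by \eqref{eqn:ya}, \eqref{eqn:yb}) automatically satisfy all five Ptolemy relations whenever $\mathbf{x}^{i+1} = R_{k_i}^{\epsilon_i}(\mathbf{x}^i)$. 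Since this is precisely how consecutive cluster variables are related in the definition of a solution, it follows that $c_{\mathbf{x}^1} \in P^{\sigma_D}(\mathcal{T})$.

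Finally, I would apply the results of Section~\ref{sec:ptol}: any $c \in P^{\sigma}(\mathcal{T})$ gives rise to a $(\psl,P)$-cocycle $\Phi_c$ with $\delta(\Phi_c) = [\sigma] \in H^2(M,\partial M;\pmo)$, and hence to a boundary-parabolic representation $\rho_c : \pi_1(M) \to \psl$, well-defined up to conjugation, whose obstruction class is $[\sigma]$. Setting $\sigma = \sigma_D$ and $c = c_{\mathbf{x}^1}$, and using that the interior of $M$ is homeomorphic to $S^3 \setminus (K\cup\{p,q\})$ and therefore $\pi_1(M) = \pi_1(S^3\setminus K)$ (the two punctures $p,q$ do not affect $\pi_1$ in dimension $3$), the proposition follows immediately.

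The only step requiring genuine work is the Ptolemy verification in the middle paragraph, and this has already been performed explicitly in Section~\ref{sec:proof} by matching the five Ptolemy relations per octahedron against \eqref{eqn:rp} and \eqref{eqn:rm}; the rest of the argument is bookkeeping. The main conceptual point to flag is that the subdivision of the octahedron chosen in Figure~\ref{fig:fiveterm}~(left), together with the specific sign assignment for $\epsilon_D$ in Figure~\ref{fig:octahedron}, is precisely calibrated so that the Ptolemy equations reproduce the Hikami--Inoue $R$-operator; a different subdivision or a different obstruction cocycle would not yield this clean correspondence (cf.\ Remark~\ref{rmk:nond}).
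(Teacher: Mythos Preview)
Your proposal is correct and follows essentially the same approach as the paper: the paper simply notes that the explicit Ptolemy computations carried out earlier in Section~\ref{sec:proof} show $c_{\mathbf{x}^1}\in P^{\sigma_D}(\mathcal{T})$, and then invokes the machinery of Section~\ref{sec:ptol}. Your write-up spells out the bookkeeping (well-definedness, non-vanishing, and the role of $\pi_1(M)=\pi_1(S^3\setminus K)$) more carefully than the paper does, but the argument is the same.
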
 
		\begin{rmk} A Ptolemy assignment (with obstruction class) determines the shape (or cross-ratios parameters) of the ideal tetrahedra so that they fulfill Thurston's gluing equations. We refer to \cite[\S~12]{garoufalidis2015complex} for details. It implies that the proof of the second part of Theorem 3.1 in \cite{hikami2015braids} is unnecessary.
			
		\end{rmk}

%
	
		\begin{prop}Let $D$ be a braid of length $n$ representing  a knot. Then $[\sigma_D]$ is $(-1)^n$ under the isomorphism $H^2(M,\partial M;\pmo) \simeq \pmo$.
		\end{prop}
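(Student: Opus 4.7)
The plan is to invoke Theorem~\ref{thm:key} with $\epsilon=\epsilon_D$. Since $\sigma_D=d(\epsilon_D)$, Theorem~\ref{thm:key} tells us that for any $c\in P^{\sigma_D}(\Tcal)$ the obstruction class of $\rho_c$ (which is exactly $[\sigma_D]\in H^2(M,\partial M;\pmo)\simeq \pmo$) equals $\overline{\epsilon}_D(\lambda)$, where $\lambda$ is the canonical (Seifert-framed) longitude of $K$. So the task reduces to a purely homological computation of $\overline{\epsilon}_D(\lambda)$.

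Next I would read off the values of $\overline{\epsilon}_D$ on the standard generators of $\pi_1(\partial\nu(K))$ directly from Figure~\ref{fig:boundarycocycle}: a meridian $\mu$ picks up a single $-1$ as it crosses one strand, while the blackboard-framed longitude $\lambda_{bf}$ (which runs along the knot in the plane of projection) collects the same signs on its over- and under-passes at each crossing and they cancel in pairs, giving $\overline{\epsilon}_D(\mu)=-1$ and $\overline{\epsilon}_D(\lambda_{bf})=1$.

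Then I would use the classical relation between the two framings,
\begin{equation*}
\lambda = \lambda_{bf}\cdot\mu^{-w(D)},
\end{equation*}
where $w(D)=n_+-n_-$ is the writhe, with $n_+$ and $n_-$ the numbers of positive and negative crossings of $D$. Applying $\overline{\epsilon}_D$ gives
\begin{equation*}
\overline{\epsilon}_D(\lambda)=\overline{\epsilon}_D(\lambda_{bf})\cdot\overline{\epsilon}_D(\mu)^{-w(D)}=(-1)^{w(D)}.
\end{equation*}
Finally, since $n_+-n_-\equiv n_++n_-\pmod 2$, the writhe satisfies $w(D)\equiv n\pmod 2$, so $(-1)^{w(D)}=(-1)^n$, and hence $[\sigma_D]=(-1)^n$ as claimed.

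There is no real obstacle here; the only point that deserves care is the signed count on the boundary torus. The meridian clearly hits one top/bottom short-edge (contributing $-1$) and some side-edges (contributing $+1$), while the blackboard longitude traverses, at each crossing, a pair of short-edges whose signs multiply to $+1$ (this is exactly what Figure~\ref{fig:octahedron} is designed to encode). Once the meridian/longitude values are confirmed, the writhe/parity identification $w(D)\equiv n\pmod 2$ finishes the argument.
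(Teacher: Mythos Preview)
Your proposal is correct and follows essentially the same route as the paper: reduce to computing $\overline{\epsilon}_D(\lambda)$, read off $\overline{\epsilon}_D(\mu)=-1$ and $\overline{\epsilon}_D(\lambda_{bf})=1$ from the description of $\epsilon_D$ in Section~\ref{sec:hicocycle}, use $\lambda=\lambda_{bf}\,\mu^{-w(D)}$, and finish with $w(D)\equiv n\pmod 2$. One small remark: invoking Theorem~\ref{thm:key} as stated presupposes the existence of some $c\in P^{\sigma_D}(\Tcal)$, which is not needed here; the identification $[d(\epsilon_D)]\leftrightarrow\overline{\epsilon}_D(\lambda)$ is the purely cohomological fact (from the long exact sequence preceding Proposition~\ref{prop:knot}) that the connecting map $H^1(\partial M;\pmo)\to H^2(M,\partial M;\pmo)\simeq\pmo$ is evaluation on the longitude, and the paper's ``it suffices'' rests directly on that.
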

		\begin{proof} It suffices to show that $\overline{\epsilon}_D(\lambda)=(-1)^n$, where $\lambda$ is the canonical longitude and  $\overline{\epsilon}_D$ denotes the homomorphism induced from $\epsilon_D \in Z^1(\partial M ;\pmo)$. Recall Section \ref{sec:hicocycle} that we have $\overline{\epsilon}_D(\mu)=-1$ and $\overline{\epsilon}_D(\lambda_{bf})=1$ for the meridian $\mu$ and blackboard framed longitude $\lambda_{bf}$. We thus obtain 
		\begin{talign*}
			\overline{\epsilon}_D(\lambda) &= \overline{\epsilon}_D(\lambda_{bf})\  \overline{\epsilon}_D(\mu)^{-w(D)}\\
			&=\overline{\epsilon}_D(\lambda_{bf})\  \overline{\epsilon}_D(\mu)^{-n} = (-1)^n.
		\end{talign*}
		Here $w(D)$ denotes the writhe of the closure of $D$ which is congruent to the length $n$ in modulo $2$.
		\end{proof}

\section{The existence of a non-degenerate solution} \label{sec:dec}

Let $\widetilde{M}$ be the universal cover of $M=S^3 \setminus \nu(K \cup\{p,q\})$ and $\widehat{\widetilde{M}}$ be the space obtained from $\widetilde{M}$ by collapsing each boundary component to a point. We denote by $I(\widetilde{M})$ the 	set of these points. Note 
that $\pi_1(M)$ acts on $I(\widetilde{M})$.

\begin{defn} For a $(\psl,P)$-representation  $\rho:\pi_1(M)\rightarrow \psl$,  a \emph{decoration} $\Dcal : I(\widetilde{M}) \rightarrow \psl/P$ is a $\rho$-equivalent assignment, i.e., $\Dcal(\gamma \cdot v) = \rho(\gamma) \Dcal(v)$ for all $\gamma \in \pi_1(M)$ and $v \in I(\widetilde{M})$. 
\end{defn}

Recall that  $\psl/P$ denotes the (left) $P$-coset space where $P$ is the subgroup of $\psl$ consisting of upper triangular matrices with ones on the diagonal.
We may identify a $P$-coset $gP$ with a vector $g\binom{1}{0}$ which is well-defined up to sign. In particular, by $\textrm{det}(gP,hP)$ we mean $\textrm{det}\left(g\binom{1}{0}, h\binom{1}{0}\right) \in \Cbb /\pmo$.

We now fix a braid presentation $D$ of a knot $K$ and let $\Tcal$ be the ideal triangulation of $S^3 \setminus (K\cup\{p,q\})$ given as in Section \ref{sec:main}.
For any decoration $\Dcal$ we define an assignment $c : \Tcal^1 \rightarrow \Cbb/{\pmo}$ by \[c(e) = \textrm{det} \left(\Dcal(v_1),\Dcal(v_2)\right)\] for $e \in \Tcal^1$ where $v_1$ and $v_2 \in I(\widetilde{M})$ are endpoints of a lift of $e$. Note that $c(e)$ does not depend on the choice of a lift of $e$, since $\Dcal$ is $\rho$-equivariant. 

\begin{prop}\label{thm:dec} For a non-trivial $(\psl,P)$-representation $\rho:\pi_1(M)\rightarrow \psl$, there exists a decoration $\Dcal$ such that the induced assignment $c$ satisfies $c(e) \neq 0 $ for all $e \in \Tcal^1$.	
\end{prop}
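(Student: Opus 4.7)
The plan is to parameterize the variety of $\rho$-equivariant decorations and show that the finitely many closed conditions $c(e) = 0$ for $e \in \Tcal^1$ fail to cover it, so that a generic decoration yields the desired non-vanishing.

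First, I would identify the $\pi_1(M)$-orbit structure of $I(\widetilde{M})$. The manifold $M = S^3\setminus \nu(K\cup\{p,q\})$ has three boundary components: two $2$-spheres bounding $\nu(p), \nu(q)$ and the torus $\partial\nu(K)$. Correspondingly, $I(\widetilde{M})$ decomposes into three $\pi_1(M)$-orbits $O_p, O_q, O_K$. The stabilizers of chosen representatives $v_p, v_q, v_K$ are trivial for $O_p, O_q$ and equal to the peripheral subgroup $\Gamma_T \cong \Zbb^2$ for $O_K$. Thus a $\rho$-equivariant decoration is determined by a triple $(F_p, F_q, F_K) \in (\psl/P)^3$ with $F_p, F_q$ arbitrary and $F_K \in \textrm{Fix}(\rho(\Gamma_T))$. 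Since $\rho$ is boundary-parabolic, $\rho(\Gamma_T)$ is conjugate to a subgroup of $P$, so $\textrm{Fix}(\rho(\Gamma_T))$ is either all of $\psl/P$ (when $\rho|_{\Gamma_T}$ is trivial) or a single point (otherwise). In either case, the decoration variety $\Dcal(\rho)$ is an irreducible algebraic variety of complex dimension at least $2$.

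Second, since $\Tcal$ is a finite triangulation and $c$ is $\pi_1(M)$-equivariant, the condition $c(e) \neq 0$ for all $e \in \Tcal^1$ reduces to finitely many edge conditions. For each such $e$, the bad locus $Z_e = \{\Dcal \in \Dcal(\rho) : c(e) = 0\}$ is Zariski-closed, and it suffices to prove $Z_e \subsetneq \Dcal(\rho)$; then $\bigcup_e Z_e$ is a proper subvariety of the irreducible variety $\Dcal(\rho)$ and any decoration in the complement works. I would carry out a case analysis on the orbits of the endpoints of $e$. For an edge with an endpoint in $O_p$ or $O_q$, $c(e)$ depends non-trivially on the corresponding free parameter $F_p$ or $F_q$, so $Z_e$ is a proper hypersurface. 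For a $K$-to-$K$ edge with endpoints $\gamma_1 v_K$ and $\gamma_2 v_K$, the condition $c(e) = 0$ becomes $\rho(\gamma_2^{-1}\gamma_1) \cdot F_K = F_K$, so that $Z_e$ is proper unless $\rho(\gamma_2^{-1}\gamma_1)$ stabilizes every point of $\textrm{Fix}(\rho(\Gamma_T))$.

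The main obstacle is the $K$-to-$K$ case when $\rho|_{\Gamma_T}$ is a non-trivial parabolic subgroup, since $F_K$ is then rigidly determined as the unique parabolic fixed point and there is no freedom to vary it. My plan here is to invoke the explicit Wirtinger construction alluded to in the introduction (cf.\ \cite{cho2016optimistic}): given $\rho$ presented on its Wirtinger meridians $\mu_1,\ldots,\mu_s$, one takes the flag at each $K$-lift to be the parabolic fixed point of the $\rho(\mu_i)$ corresponding to that arc. Non-degeneracy on a $K$-to-$K$ edge then reduces to a concrete inequality---distinct parabolic fixed points for the two Wirtinger meridians across the corresponding crossing---which fails exactly when $\rho$ lies entirely in a single Borel subgroup; in that reducible-but-non-trivial case the freedom in $F_p, F_q$ together with the two sphere boundary components permits a small re-conjugation that still yields a valid decoration. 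Once each $Z_e$ is known to be proper, a generic point of $\Dcal(\rho) \setminus \bigcup_e Z_e$ furnishes the required decoration.
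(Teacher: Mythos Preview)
Your general strategy---parameterize decorations, show each bad locus $Z_e$ is proper, take a generic point---is sound and is essentially what the paper does. But you have missed the key structural fact about this particular triangulation that makes the argument short: \emph{every edge of $\Tcal$ has at least one endpoint at $p$ or $q$}. There are no $K$-to-$K$ edges. This is precisely why the authors chose the five-tetrahedron subdivision of each octahedron in Figure~\ref{fig:octahedron} rather than the one in \cite{hikami2015braids}; see Remark~\ref{rmk:essen}. With this fact in hand, your entire ``main obstacle'' paragraph is moot, and only the easy case (an endpoint in $O_p$ or $O_q$) remains. Your treatment of the phantom $K$-to-$K$ case is in any event not rigorous: the ``small re-conjugation'' in the reducible situation does not change whether $\det(F_K,\rho(\gamma)F_K)$ vanishes, so that sentence would not salvage the argument if such edges existed.

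There is also a point you glossed over in the case you did treat. For an edge $e$ joining $p$ to itself (or $q$ to itself), $c(e)=\det(F_p,\rho(\gamma)F_p)$ for some $\gamma\in\pi_1(M)$, and this vanishes identically in $F_p$ exactly when $\rho(\gamma)=\pm I$. You need to argue that this cannot happen. The paper observes that for the edges of $\Tcal$ of this type, the relevant $\gamma$ is a Wirtinger generator (a meridian); since all meridians are conjugate, non-triviality of $\rho$ forces $\rho(\gamma)\neq\pm I$. This is where the hypothesis ``$\rho$ non-trivial'' actually enters, and it deserves an explicit sentence. Once you add the observation that $\Tcal$ has no $K$-to-$K$ edges and handle the $p$-to-$p$ and $q$-to-$q$ edges as above, your genericity argument goes through and matches the paper's proof.
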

The proof of Proposition~\ref{thm:dec} (see Section~\ref{sec:deco} for details) relies on the following basic facts: (i) every edge of $\Tcal$ are connected to either $p$ or $q$; (ii) a decoration on the lifts of $p$ and $q$ can be chosen freely and independently (respecting $\rho$-equivalence only). The observation that (i) and (ii) implies Proposition~\ref{thm:dec} was first pointed out to the authors by Seonhwa Kim. 
We also note that there are edges connecting $p$ (or $q$) to itself and this is the reason why we can not detect the trivial representation.
Namely, these edges become generators in the Wirtinger presentation (see Figure \ref{fig:developing2}~(left)) and thus the
image of the generators under $\rho$ must be non-trivial.

\begin{rmk} \label{rmk:essen} The ideal triangulation used in \cite{hikami2015braids} does not satisfy fact (i) above. The edge $x_c$ in Figure 4 of \cite{hikami2015braids} joins an ideal vertex corresponding to the knot to itself. In this case, Proposition \ref{thm:dec} may not hold, whenever the closure of $D$ has a kink. Therefore, it is essential to use another subdivision of an octahedron (for instance, as in Figure \ref{fig:octahedron}) so that fact  (i) holds. 
\end{rmk}
Proposition \ref{thm:dec} implies the existence of a non-degenerate solution desired as in Theorem \ref{thm:int2}. More precisely, the following holds.

\begin{thm} \label{thm:exist} Let $\sigma_D \in Z^2(M,\partial M;\pmo)$ be the cocycle given as in Section \ref{sec:main}. If a non-trivial $(\psl,P)$-representation $\rho$ has the   obstruction class $[\sigma_D] \in H^2(M,\partial M;\pmo)$, then there exists a point $c\in P^{\sigma_D}(\Tcal)$ such that $\rho_c$ coincides with $\rho$ up to conjugation.
\end{thm}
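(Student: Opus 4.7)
The plan is to produce the Ptolemy assignment $c$ from a carefully chosen decoration of $\rho$ via the Pl\"ucker identity, and then adjust its associated obstruction cocycle to match $\sigma_D$. First I would apply Proposition~\ref{thm:dec} to $\rho$ to obtain a decoration $\Dcal : I(\widetilde{M}) \rightarrow \psl/P$ whose induced edge assignment $c : \Tcal^1 \rightarrow \Cbb/\pmo$ is nowhere zero, and pick an arbitrary antisymmetric lift $\tilde{c} : \Tcal^1 \rightarrow \Cbb\setminus\{0\}$ so as to work with actual complex numbers.

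Second I would exploit the Pl\"ucker identity
\[
\det(v_0,v_2)\det(v_1,v_3) \;=\; \det(v_0,v_1)\det(v_2,v_3) \,+\, \det(v_0,v_3)\det(v_1,v_2)
\]
for vectors $v_0,\ldots,v_3 \in \Cbb^2$, applied to any lifts of $\Dcal(v_0),\ldots,\Dcal(v_3)$ at the four vertices of each ideal tetrahedron of $\Tcal$. This will force $\tilde{c}$ to satisfy the Ptolemy relation~\eqref{eqn:obs} with some sign assignment $\sigma'$ on the hexagonal faces of $\Tcal$. Using $\rho$-equivariance of $\Dcal$ together with the compatibility of $\tilde c$ under face pairings, $\sigma'$ should assemble into a genuine 2-cocycle in $Z^2(M,\partial M;\pmo)$, so that $\tilde{c} \in P^{\sigma'}(\Tcal)$. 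The explicit formula for $\Phi_{\tilde c}$ given in Section~\ref{sec:ptol} will then show that $\rho_{\tilde{c}}$ is conjugate to $\rho$; in particular $[\sigma']$ coincides with the obstruction class of $\rho$, which equals $[\sigma_D]$ by hypothesis.

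Third I would upgrade $\tilde c \in P^{\sigma'}(\Tcal)$ to an element $c \in P^{\sigma_D}(\Tcal)$ inducing the same representation. Since $\sigma'$ and $\sigma_D$ represent the same class in $H^2(M,\partial M;\pmo)$, this is a standard consequence of the fact that the Ptolemy variety depends on $\sigma$ only through $[\sigma]$ up to a representation-preserving isomorphism (cf.~\cite[\S9]{garoufalidis2015complex}); one implements the change of cocycle by modifying the vertex lifts of $\Dcal$, or equivalently flipping signs of $\tilde c$ on edges, in a controlled way dictated by a 1-cochain $\tau$ with $\sigma' = \sigma_D \cdot d\tau$.

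The hardest step will be the second one: carefully verifying that the sign data $\sigma'$ extracted from a free choice of vector lifts assembles into a cocycle on $(M,\partial M)$ rather than merely a 2-cochain, and that its cohomology class really is the obstruction class of $\rho_{\tilde c}$. Both facts follow the same philosophy as Section~\ref{sec:hicocycle}, where a specific cocycle $\sigma_D$ was written down and shown to govern the Ptolemy relations arising from the Hikami-Inoue $R$-operator, but in the present situation the same bookkeeping must be performed for the general decoration produced by Proposition~\ref{thm:dec}.
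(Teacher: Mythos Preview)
Your proposal is correct and follows essentially the same strategy as the paper's proof: take the decoration from Proposition~\ref{thm:dec}, lift the resulting $\Cbb/\pmo$-valued assignment to $\Cbb\setminus\{0\}$ to land in some $P^{\sigma'}(\Tcal)$ with $\rho_{\tilde c}=\rho$, observe that $[\sigma']=[\sigma_D]$ since both equal the obstruction class of $\rho$, and then invoke the canonical isomorphism $P^{\sigma'}(\Tcal)\cong P^{\sigma_D}(\Tcal)$. The paper dispatches your ``hardest step'' (that $\sigma'$ is a genuine cocycle and that $\rho_{\tilde c}=\rho$) in a single sentence by citing it as known from~\cite{garoufalidis2015complex,zickert2009volume}, whereas you spell out the Pl\"ucker mechanism and the coboundary adjustment more explicitly; but the architecture is the same.
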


\begin{proof} Let $\Dcal$ be a decoration as in Proposition \ref{thm:dec}. Whenever one chooses a sign of each $c(e)$, it is known that $c :\Tcal^1\rightarrow \Cbb \setminus \{0\}$ is a point of $P^\sigma(\Tcal)$ for some $\sigma \in Z^2(M,\partial M;\pmo)$ such that $\rho_c=\rho$ up to conjugation. In particular, the obstruction class of $\rho$ is $[\sigma] \in H^2(M,\partial M;\pmo)$. Then the theorem follows from the fact that if $\sigma_0$ and $\sigma_1 \in Z^2(M,\partial M;\pmo)$ satisfy $[\sigma_0]=[\sigma_1]$, then two varieties $P^{\sigma_0}(\Tcal)$ and $P^{\sigma_1}(\Tcal)$ are canonically isomorphic.	
\end{proof}

As we computed in Section \ref{sec:proof}, the class $[\sigma_D]$ viewed as an element of $\pmo$ coincides with $(-1)^n$ where $n$ is the length of $D$. We therefore obtain Theorem \ref{thm:int2} as a consequence of Theorem \ref{thm:exist}.

\subsection{Proof of Proposition \ref{thm:dec}} \label{sec:deco}
 We first consider edges, say $e_1,\cdots,e_m$, of $\Tcal$ that join $p$ and $q$. We orient these edges from $q$ to $p$. We choose a lift $\widetilde{e}_j$ of each $e_j$ so that their terminal points agree as in Figure \ref{fig:developing}. We denote by $\widetilde{p}$ the terminal point  and by $\widetilde{q}_j$ the initial point of $\widetilde{e}_j$. From $\rho$-equivariance of $\Dcal$, we have $$\Dcal(\widetilde{q}_{j})=\rho(g) \Dcal(\widetilde{q}_k) $$
for some $g\in \pi_1(M)$.  From elementary covering theory one can check that if $e_j \cup e_k$ wraps an arc of $K$ as in  Figure \ref{fig:developing}, then the loop $g$ should be the Wirtinger generator corresponding to the arc. Note that $c(e_k) \neq 0 $ if and only if $\textrm{det}(\Dcal(\widetilde{p}),\Dcal(\widetilde{q}_k)) \neq 0$.
\begin{figure}[!h]
	\centering
	\scalebox{1}{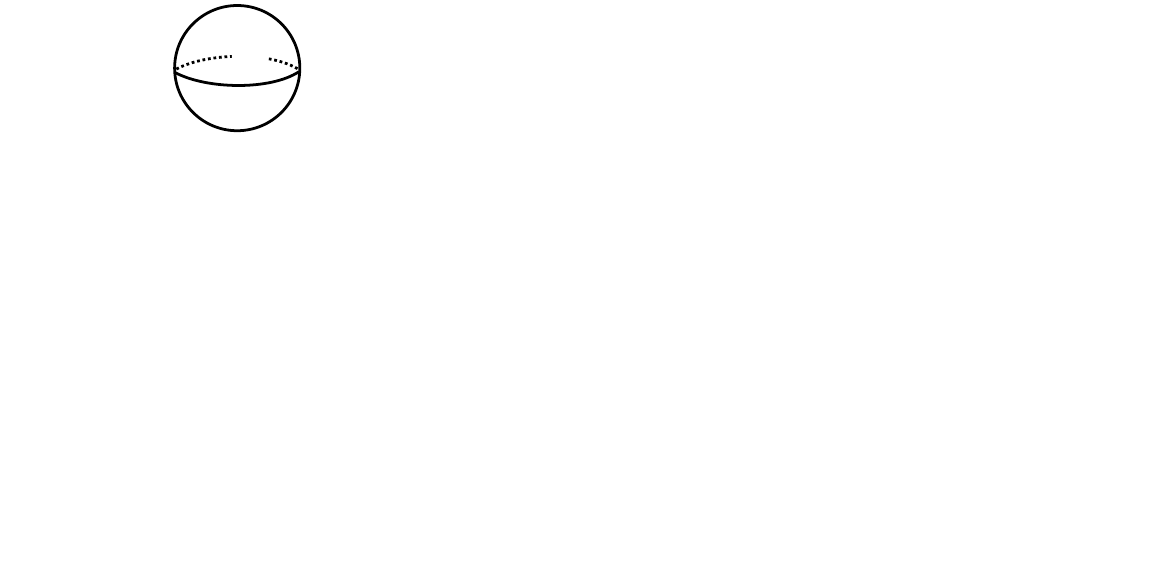} 
	\caption{Local configuration of a lift.}
	\label{fig:developing}
\end{figure}

We then consider edges of $\Tcal$ that are connected to the knot $K$; for example, edges $x$ and $y$ as in Figure \ref{fig:developing}. We consider an ideal triangle in $S^3 \setminus (K \cup \{p,q\})$ with edges $x,y,e_k$ as in Figure \ref{fig:developing}, and its lift so that $p$ corresponds to the point $\widetilde{p}$. We denote the edges of the lift by $\widetilde{x},\widetilde{y},\widetilde{e}_k$. Since the terminal point, $\widetilde{r}$, of $\widetilde{x}$ (or $\widetilde{y}$) is fixed by the Wirtinger generator $g$, we obtain
\[\Dcal(\widetilde{r})=\Dcal(g \cdot \widetilde{r}) =\rho(g) \Dcal(\widetilde{r}).\]
Since $\textrm{tr}(\rho(g))=\pm2$ and $\rho(g) \neq \mathrm{Id}$, otherwise $\rho$ should be a trivial representation, $\rho(g)$ has a unique eigenvector up to scaling. It thus follows that $c(x)=\textrm{det}(\Dcal(\widetilde{p}),\Dcal(\widetilde{r})) \neq 0$ if and only if $\Dcal(\widetilde{p})$ is not an eigenvector of $\rho(g)$. Similarly, $c(y) \neq 0 $ if and only if $\Dcal(\widetilde{q}_k)$ is not an eigenvector of $\rho(g)$.

We finally consider edges of $\Tcal$  joining $q$ (or $p$) to itself; for example, an edge $x$ as in Figure \ref{fig:developing2}.
We consider an ideal triangle in $S^3 \setminus (K \cup \{p,q\})$ with edges $e_j,e_k,x$ as in Figure \ref{fig:developing2}, and its lift so that $p$ corresponds to the point $\widetilde{p}$. We  denote the edges of the lift by $\widetilde{e}_j,\widetilde{e}_k,\widetilde{x}$.
 It directly follows that $c(x) \neq 0$ if and only if \[\textrm{det}(\Dcal(\widetilde{q}_j),\Dcal(\widetilde{q}_k))=\textrm{det}(\rho(g)\Dcal(\widetilde{q}_k),\Dcal(\widetilde{q}_k)) \neq0.\] Again, this is equivalent to the condition that $\Dcal(\widetilde{q}_k)$ is not an eigenvector of $\rho(g)$. 
	\begin{figure}[!h]
		\centering
		\scalebox{1}{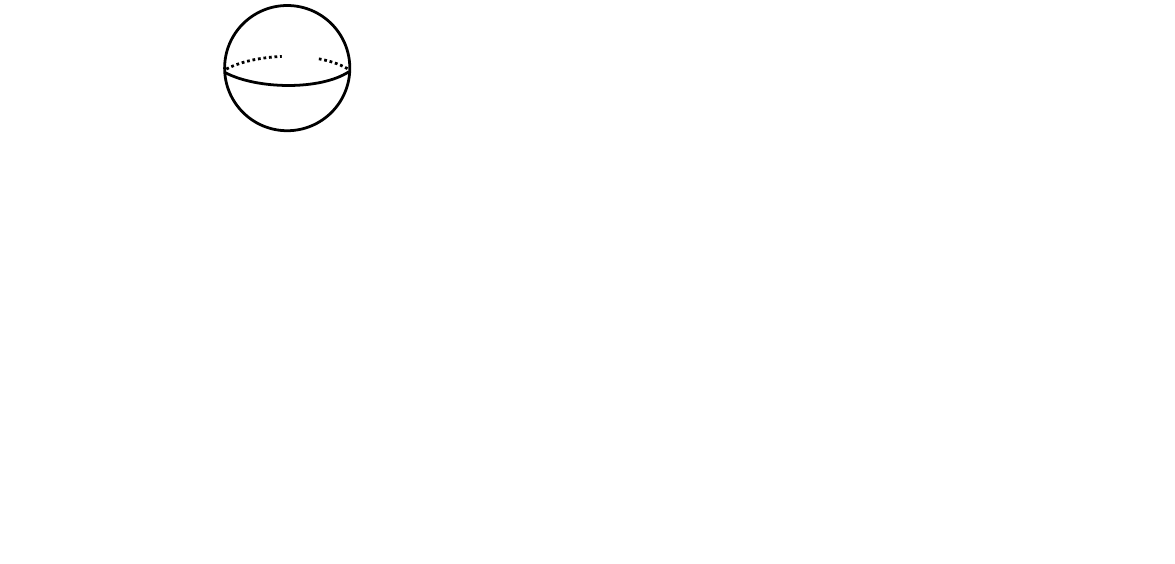} 
		\caption{Local configuration of a lift.}
		\label{fig:developing2}
	\end{figure}

Let us sum up the conditions. To be precise, we enumerate the Wirtinger generators by $g_1,\cdots,g_{l}$. Our desired decoration as in Proposition~\ref{thm:dec} should satisfy (i) $ \textrm{det}(\Dcal(\widetilde{p}), \Dcal(\widetilde{q}_j)) \neq 0$; (ii) $\Dcal(\widetilde{p})$ is not an eigenvector of $\rho(g_i)$; (iii) $\Dcal(\widetilde{q}_j)$ is not an eigenvector of $\rho(g_i)$ 
for all $1 \leq j \leq m$ and $1 \leq i \leq l$.
Since we can choose $\Dcal(\widetilde{p})$ and one of $\Dcal(\widetilde{q}_j)$'s freely, such a decoration exists.

\subsection{Explicit computation from a representation} \label{sec:comp}
Let $D$ be a braid presentation of a knot $K$ and let $\rho :\pi_1(S^3 \setminus K)\rightarrow \psl$ be a non-trivial $(\psl,P)$-representation whose obstruction class is $(-1)^n$, where $n$ is the length of $D$. We devote this subsection to present an explicit formula for computing a solution.

Let $\widetilde{\rho}$ be an $\sl$-lift  of $\rho$ satisfying
\begin{equation*}
	\widetilde{\rho}(\mu)=\begin{pmatrix}
	-1 & * \\ 0 & -1
	\end{pmatrix} \neq -\textrm{Id} \quad \textrm{and} \quad
	\widetilde{\rho}(\lambda)=\begin{pmatrix}
	(-1)^n & * \\ 0 & (-1)^n
	\end{pmatrix}
\end{equation*}  (recall Proposition \ref{prop:knot}).
%
%
 We index the regions of the closure of $D$ by $1 \leq j \leq n+2$ and the arcs by $1 \leq i \leq n$.
We then assign a non-zero column vector $V_j$ to the $j$-th region so that these vectors satisfy \begin{equation}\label{eqn:region} V_{j_2} =\widetilde{\rho}(g_i)^{-1} V_{j_1} \end{equation} for Figure \ref{fig:rule}~(left) where $m_i$ is the Wirtinger generator corresponding to the $i$-th arc. The region-colorings are well-determined whenever an initial vector is chosen arbitrarily.
Remark that $V_j$ corresponds to $\Dcal(\widetilde{q}_j)$ in Section \ref{sec:deco}.

 We also assign a non-zero column vector $H_i$ to the $i$-th arc so that these vectors satisfy $\widetilde{\rho}(g_i)H_i = -H_i$ for $1 \leq i \leq m$ (recall that the eigenvalue of $\widetilde{\rho}(g_i)$ is $-1$) and 
\begin{equation}\label{eqn:arcclr}
H_{i_3} = \widetilde{\rho}(g_{i_2})^{-1}H_{i_1}
\end{equation} for Figure \ref{fig:rule}~(right). 
We remark that the fact that the eigenvalue of $\widetilde{\rho}(\lambda_{bf})$ is $1$ (equivalently, the eigenvalue of $\widetilde{\rho}(\lambda)$ is $(-1)^n$) is required here.

\begin{figure}[!h]
	\centering
	\scalebox{1}{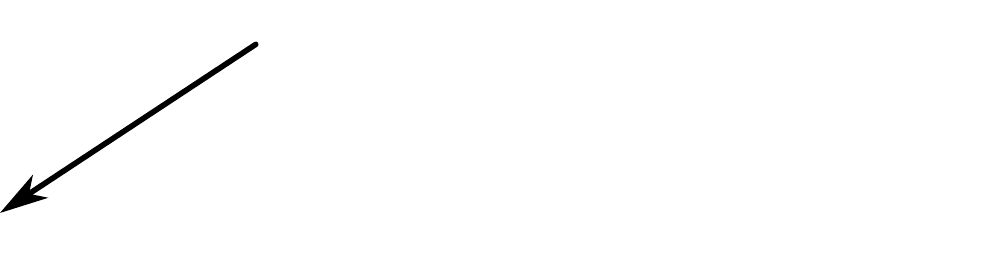}
	\caption{Rules for region- and arc-colorings.}
	\label{fig:rule}
\end{figure}

Recall that the octahedral decomposition $\Ocal$ has $3n+2$ edges; (i) $n$ of them, called \emph{over-edges}, stand above the knot; (ii) other $n$ of them, called \emph{under-edges}, stand below the knot; (iii) last $n+2$ of them, called \emph{regional edges}, stand on the regions. See Figure~\ref{fig:rule2}. 
We choose an additional non-zero column vector $W$ (which corresponds to $\Dcal(\widetilde{p})$ in Section \ref{sec:deco}) and define the set map $c : \Ocal^1 \rightarrow \Cbb$ as follows.
\begin{itemize}
	\item [(i)] $c(e)= \textrm{det}(H_i,W)$ if $e$ is the over-edge standing over the $i$-th arc;
	\item [(ii)] $c(e)= \textrm{det}(V_j,H_i)$ if $e$ is the under-edge standing below the $i$-th arc whose left-side region is indexed by $j$;
	\item  [(iii)] $c(e) = \textrm{det}(V_j,W)$ if $e$ is the regional edge corresponding the $j$-th region.
\end{itemize} Here we oriented the edge $e$ as in Figure \ref{fig:rule2}.

\begin{figure}[!h]
	\centering
	\scalebox{1}{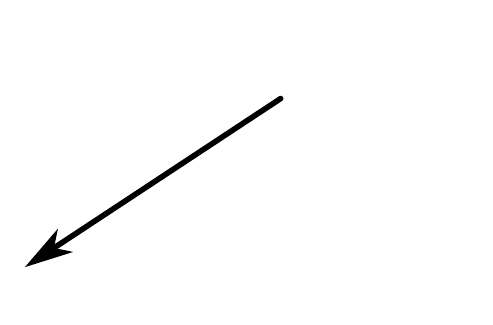}
	\caption{Edges of $\Ocal$ with $c$-values.}
	\label{fig:rule2}
\end{figure} 

We again extend the above set map to $c : \Tcal^1 \rightarrow \Cbb$ by using the equations (\ref{eqn:ya}) and (\ref{eqn:yb}). 
As we showed in Section \ref{sec:deco} for a generic choice of $W$ and $V_j$'s, we have $c(e) \neq 0 $ for all $e\in \Tcal^1$.

%


\begin{exam}[The $4_1$ knot with a kink] Let us consider a braid of the knot $4_1$ as in Figure \ref{fig:braid}.
	\begin{figure}[!h]
		\centering
		\scalebox{1}{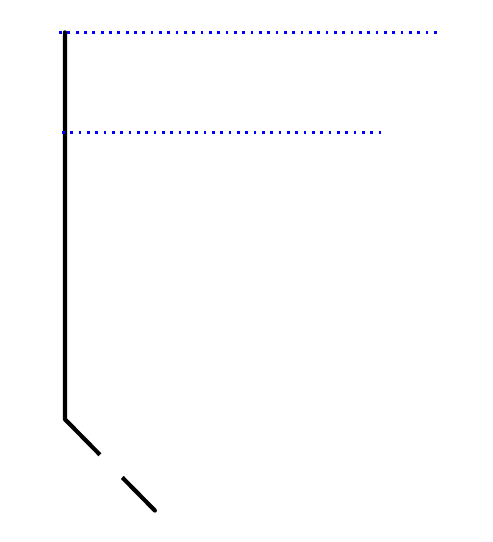}
		\caption{A braid presentation of the $4_1$ knot.}
		\label{fig:braid}
	\end{figure} The geometric representation $\rho$ lifts to an $\sl$-representation $\widetilde{\rho}$ such that
	\begin{equation*}
		\begin{array}{l}
		\widetilde{\rho}(g_1)=\widetilde{\rho}(g_2)=\begin{pmatrix}
		-1 & -1 \\ 0 & -1
		\end{pmatrix},\quad \widetilde{\rho}(g_3)=\begin{pmatrix}
		-1 & 0 \\ -\lambda & -1
		\end{pmatrix}\\[10pt]
		 \widetilde{\rho}(g_4)=\begin{pmatrix}
		-1-\lambda & \lambda \\ -\lambda & -1+\lambda
		\end{pmatrix},\quad \widetilde{\rho}(m_4)=\begin{pmatrix}
		-2& \lambda \\ 	-1+\lambda & 0
		\end{pmatrix}
		\end{array}
	\end{equation*} where $\lambda^2-\lambda+1=0$. 
	
	We enumerate the arcs and regions of the closure of the braid as in Figure \ref{fig:braid}.
	Choosing the vector $H_1=\binom{1}{0}$, the equation \eqref{eqn:arcclr} gives
	\begin{equation*}
	\begin{array}{ll}
	H_2=\widetilde{\rho}(g_2)^{-1} H_1=\binom{-1}{0},& H_3=\widetilde{\rho}(g_5)^{-1}H_2=\binom{0}{-1+\lambda}\\[10pt]
	H_4=\widetilde{\rho}(g_2) H_3=\binom{1-\lambda}{1-\lambda},& H_5=\widetilde{\rho}(g_3)^{-1}H_4=\binom{-1+\lambda}{\lambda}.
	\end{array}
	\end{equation*}
	Similarly, letting the vector $V_1=\binom{\alpha}{\beta}$ for some $\alpha,\beta \in \Cbb$, the equation \eqref{eqn:region} gives
	\begin{equation*}
	\begin{array}{ll}
	V_2=\widetilde{\rho}(g_1)^{-1} V_1=\binom{-\alpha+\beta}{-\beta},&
	V_3=\widetilde{\rho}(g_2)^{-1} V_2=\binom{\alpha-2\beta}{\beta}\\[10pt]
	V_4=\widetilde{\rho}(g_4)^{-1} V_2=\binom{\alpha(1-\lambda)+\beta(-1+2\lambda)}{-\alpha \lambda+\beta(1+2\lambda)},& V_5=\widetilde{\rho}(g_3)^{-1}V_3=\binom{-\alpha+2\beta}{\alpha\lambda-\beta(1+2\lambda)}\\[10pt]
	V_6=\widetilde{\rho}(g_5)^{-1}V_4=\binom{\alpha(-1+\lambda)+\beta(2-3\lambda)}{\alpha\lambda-\beta(1+3\lambda)}, &
	V_7=\widetilde{\rho}(g_5)^{-1}V_5=\binom{\alpha(1-\lambda)+\beta(-2+3\lambda)}{-\alpha(1+\lambda)+2\beta(2+\lambda)}.
	\end{array}
	\end{equation*} Then finally, letting the vector $W=\binom{\gamma}{1}$ for some $\gamma \in \Cbb$, we obtain the cluster variables $\mathbf{x}^1,\cdots,\mathbf{x}^5$ as follows. We note that a generic choice for $\alpha, \beta$, and $\gamma$ gives a non-degenerate solution. Here we abbreviate $\textrm{det}(\cdot,\cdot)$ by $|\cdot,\cdot|$.
	\begin{equation*}
		\mathbf{x}^1
		=\begin{pmatrix}
		|V_1, W|\\|V_2, H_1|\\|H_1, W|\\|V_2, W|\\|V_3, H_2|\\|H_2, W|\\|V_3, W|\\|V_6, H_4|\\ |H_4, W|\\|V_6, W|\\|V_7, H_3|\\|H_3, W|\\|V_7, W|
		\end{pmatrix}^T=\begin{pmatrix}
			\alpha -\beta  \gamma \\
			\beta \\
			1\\
			-\alpha +\beta  \gamma +\beta\\
			\beta \\
			-1 \\
			\alpha -\beta  (\gamma +2)\\
			(\lambda -1) (\alpha -3 \beta ) \\
			(\gamma -1) (\lambda -1) \\
			\alpha  (-\gamma  \lambda +\lambda -1)+\beta  (3 (\gamma -1) \lambda +\gamma +2) \\
			\alpha  \lambda -\beta  (2 \lambda +1)\\
			\gamma -\gamma  \lambda \\
			\alpha  ((\gamma -1) \lambda +\gamma +1)-\beta  (2 \gamma  (\lambda +2)-3 \lambda +2)
		\end{pmatrix}^T
	\end{equation*}
	\begin{equation*}
		\mathbf{x}^2
		=\begin{pmatrix}
		|V_1, W|\\|V_2, H_1|\\|H_1, W|\\|V_2, W|\\|V_3, H_2|\\|H_2, W|\\|V_3, W|\\|V_5, H_3|\\ |H_3, W|\\|V_5, W|\\|V_7, H_5|\\|H_5, W|\\|V_7, W|
		\end{pmatrix}^T=\begin{pmatrix}
			\alpha -\beta  \gamma \\
			\beta \\
			1 \\
			-\alpha +\beta  \gamma +\beta \\
			\beta \\
			-1\\
			\alpha -\beta  (\gamma +2)\\
			\lambda ^2 (-(\alpha -2 \beta ))\\
			\gamma -\gamma  \lambda \\
			\beta  (2 \gamma  \lambda +\gamma +2)-\alpha  (\gamma  \lambda +1)\\
			(\lambda -1) (\alpha -3 \beta )\\
			-\gamma  \lambda +\lambda -1\\
			\alpha  ((\gamma -1) \lambda +\gamma +1)-\beta  (2 \gamma  (\lambda +2)-3 \lambda +2)	\end{pmatrix}^T
		\end{equation*}
	\begin{equation*}
	\mathbf{x}^3
	=\begin{pmatrix}
	|V_1, W|\\|V_2, H_1|\\|H_1, W|\\|V_2,W|\\|V_4, H_4|\\|H_4, W|\\|V_4, W|\\|V_5, H_2|\\ |H_2, W|\\|V_5, W|\\|V_7, H_5|\\|H_5, W|\\|V_7, W|
	\end{pmatrix}^T=\begin{pmatrix}
	\alpha -\beta  \gamma \\
	\beta \\ 1 \\ 
	-\alpha +\beta  \gamma +\beta \\
	(\lambda -1) (-(\alpha -2 \beta )) \\
	(\gamma -1) (\lambda -1)\\
	(\gamma -1) \lambda  (\alpha -2 \beta )+\alpha -\beta  (\gamma +1)\\
	\alpha  \lambda -\beta  (2 \lambda +1)\\
	-1 \\
	\beta  (2 \gamma  \lambda +\gamma +2)-\alpha  (\gamma  \lambda +1) \\
	(\lambda -1) (\alpha -3 \beta ) \\
	-\gamma  \lambda +\lambda -1 \\
	\alpha  ((\gamma -1) \lambda +\gamma +1)-\beta  (2 \gamma  (\lambda +2)-3 \lambda +2)
\end{pmatrix}^T
	\end{equation*}
	
	\begin{equation*}
	\mathbf{x}^4
	=\begin{pmatrix}
	|V_1, W|\\|V_2, H_1|\\|H_1, W|\\|V_2, W|\\|V_4, H_4|\\|H_4, W|\\|V_4, W|\\|V_6, H_5|\\ |H_5, W|\\|V_6, W|\\|V_7, H_3|\\|H_3, W|\\|V_7, W|
	\end{pmatrix}^T=\begin{pmatrix}
		\alpha -\beta  \gamma \\
		\beta \\
		1\\
		-\alpha +\beta  \gamma +\beta \\
		(\lambda -1) (-(\alpha -2 \beta ))\\
		(\gamma -1) (\lambda -1)\\
		(\gamma -1) \lambda  (\alpha -2 \beta )+\alpha -\beta  (\gamma +1)\\
		-\beta \\
		-\gamma  \lambda +\lambda -1 \\
		\alpha  (-\gamma  \lambda +\lambda -1)+\beta  (3 (\gamma -1) \lambda +\gamma +2) \\
		\alpha  \lambda -\beta  (2 \lambda +1) \\
		\gamma -\gamma  \lambda \\
		\alpha  ((\gamma -1) \lambda +\gamma +1)-\beta  (2 \gamma  (\lambda +2)-3 \lambda +2)	\end{pmatrix}^T
	\end{equation*}
	
	\begin{equation*}
	\mathbf{x}^5
	=\begin{pmatrix}
	|V_1, W|\\|V_2, H_1|\\|H_1, W|\\|V_2, W|\\|V_3, H_1|\\|H_1, W|\\|V_3, W|\\|V_6, H_4|\\ |H_4, W|\\|V_6, W|\\|V_7, H_3|\\|H_3, W|\\|V_7, W|
	\end{pmatrix}^T=\begin{pmatrix}
		\alpha -\beta  \gamma \\
		\beta \\
		1 \\
		-\alpha +\beta  \gamma +\beta \\
		-\beta \\
		1 \\
		\alpha -\beta  (\gamma +2) \\
		(\lambda -1) (\alpha -3 \beta ) \\
		(\gamma -1) (\lambda -1) \\
		\alpha  (-\gamma  \lambda +\lambda -1)+\beta  (3 (\gamma -1) \lambda +\gamma +2) \\
		\alpha  \lambda -\beta  (2 \lambda +1)\\
		\gamma -\gamma  \lambda \\
		\alpha  ((\gamma -1) \lambda +\gamma +1)-\beta  (2 \gamma  (\lambda +2)-3 \lambda +2)
\end{pmatrix}^T
	\end{equation*}
 
\end{exam}
\bibliographystyle{abbrv}
\bibliography{biblog}
\end{document}